\newtheorem{tm}{Theorem}[section]    % theorem like environments
\newtheorem{lemma}[tm]{Lemma}
\newtheorem{prop}[tm]{Proposition}
\theoremstyle{definition}
\newtheorem{df}{Definition}[section]
\newtheorem{exmp}{Example}[section]
\newtheorem{rem}{Remark}[section]
\newcommand{\beqa}{\begin{eqnarray*}}
\newcommand{\eeqa}{\end{eqnarray*}}
\newcommand{\field}[1]{\mathbb{#1}}
\newcommand{\bN}{\field{N}}        %  natural numbers 
\def\cR{\mathcal{ R}}
\def\<{\left<}
\def\>{\right>}
\def\mv1{M_v^1}
\normalfont\fontsize{15}{15}\bfseries}{\thesection}{1em}{}
\normalfont\fontsize{13}{13}\bfseries}{\thesubsection}{1em}{}
\normalfont\fontsize{13}{13}\itshape}{\thesubsubsection}{.5em}{}
\newcommand{\N}{\ensuremath{\mathbb{N}}}
\newcommand{\mz}{Marcinkiewicz-Zygmund}
\newcommand{\sd}{\mathbb{S}^d}
\renewcommand{\S}{\ensuremath{\mathbb{S}}}
\newcommand{\R}{\ensuremath{\mathbb{R}}}
\def\3{\ss}
\newcommand*\pFq[6][8]{
  \begingroup 
  \pFqmuskip=#1mu\relax
  \begingroup\lccode`\~=`\,
  \lowercase{\endgroup\let~}\pFqcomma
  {}_{#2}F_{#3}{\left(\genfrac..{0pt}{}{#4}{#5};#6\right)}%
  \endgroup
}
\newcommand*\pRegFq[6][8]{
  \begingroup 
  \pFqmuskip=#1mu\relax
  \begingroup\lccode`\~=`\,
  \lowercase{\endgroup\let~}\pFqcomma
  {}_{#2}\tilde{F}_{#3}{\left(\genfrac..{0pt}{}{#4}{#5};#6\right)}%
  \endgroup
}
\newcommand{\pFqcomma}{\mskip\pFqmuskip}
\DeclareMathOperator*{\diam}{diam}
\DeclareMathOperator*{\spann}{span}
\DeclareMathOperator{\dist}{dist}
\def\Patchintertext{\let\oldintertext@\intertext@
                    \def\intertext@{\oldintertext@\Patchintertext@}}
\def\Patchintertext@ {\let\oldintertext\intertext
                      \def\intertext ##1{\oldintertext
                      {\color{black}\let\SetColor\color ##1}}}
\everymath\expandafter{\the\everymath\SetColor{inline}}
\everydisplay\expandafter{\the\everydisplay\SetColor{display}}
\definecolor{display}{rgb}{0,0,.5}
\definecolor{inline}{rgb}{0,0,.3}
\let\SetColor\color
\begin{document}
\title[]{Geodesic cycles on the Sphere: $t$-designs and Marcinkiewicz-Zygmund Inequalities}
\author[M.~Ehler]{Martin Ehler}%\corref{cor1}}
\address[M.~Ehler]{University of Vienna,
Faculty of Mathematics, Vienna, Austria
}
\email{martin.ehler@univie.ac.at}
\author[K.~Gr\"ochenig]{Karlheinz Gr\"ochenig}%\corref{cor1}}
\address[K.~Gr\"ochenig]{University of Vienna,
Faculty of Mathematics, Vienna, Austria
}
\email{karlheinz.groechenig@univie.ac.at}
\author[C.~Karner]{Clemens Karner}%\corref{cor1}}
\address[C.~Karner]{University of Vienna,
Faculty of Mathematics, Vienna, Austria}
\email{clemens.karner@univie.ac.at}
\subjclass[2010]{41A55,41A63,94A12,26B15}
\keywords{Geodesic cycles, spherical arcs, $t$-designs, \mz\
  inequalities, mobile sampling, area-regular partition}
%\thanks{K.\ G.\ was
%  supported in part by the  project P31887-N32  of the
%Austrian Science Fund (FWF)}

\begin{abstract}
A geodesic cycle is a closed curve that connects finitely many points along geodesics. We study geodesic cycles on the sphere in regard to their role in equal-weight quadrature rules and approximation. 
\end{abstract}

\maketitle
%---------------------------------------------------------------------------
%---------------------------------------------------------------------------
%---------------------------------------------------------------------------

\section{Introduction}
Geodesic cycles or chains on the sphere \(\mathbb{S}^d = \{x \in
\mathbb{R}^{d+1} : \|x\| = 1\}\) are configurations of finite geodesic
arcs and are studied in various fields of discrete and applied
mathematics. The examples in \cite{Viglietta23, Viglietta24} are
inspired by the art gallery problem in computational
geometry. Principal curves  formed by geodesic chains on the sphere
are applied in statistical data analysis \cite{Hauberg:2015dw,
  Lee:2021hj}. Geodesic chains are also used to approximate smooth
spherical curves effectively. Since geodesic chains can be described
by the coordinates of the endpoints of their arcs, the so-called
control points, they possess a handy and accessible description that
often facilitates an analytic approach to complex problems.

In this work, we study geodesic cycles that either form $t$-design curves \cite{EG:2023} or
satisfy \mz\ inequalities.  
The concept of $t$-design curves was recently proposed by us  in
\cite{EG:2023} as an extension of $t$-design points. To be specific,   
  a closed curve $\gamma: [0, 1] \to \mathbb{S}^d$ with arc length $\ell(\gamma)$ is called a spherical $t$-design \emph{curve} in \cite{EG:2023} if the path integral satisfies
\begin{equation} \label{tdesign1}
    \frac{1}{\ell(\gamma)} \int_{\gamma} f = \int_{\mathbb{S}^d} f\,, 
  \end{equation}
  for all polynomials in $d+1$ variables of degree not exceeding $t$. 
If equality in this definition is replaced by a norm equivalence of $L^p$-norms, one
obtains \mz\ inequalities which will be discussed below. 

Both $t$-designs and \mz\ inequalities are usually formulated for
point sets and possess a rich history. For comparison, recall that  
the defining property of  $t$-design
points is  the equal-weight quadrature rule
\begin{equation*}
    \frac{1}{n} \sum_{j=1}^{n} f(x_j) = \int_{\mathbb{S}^d} f\,,
\end{equation*}
for all algebraic polynomials $f$ in $d+1$ variables of total degree
at most $t$. 
Spherical design points have  been extensively
studied ever since the fundamental article  by Delsarte, Goethals, and Seidel in the 1970s
\cite{Delsarte:1977aa}. See
\cite{Womersley:2018we, Danev2001, Reznick:1995, Sloane:2003zp,
  HardinSloane96, Seidel:2001aa, Graf:2011lp, Goethals:81} for a
sample of contributions. A long list of numerical and some analytic
examples of $t$-design points on $\mathbb{S}^2$ are collected on the
websites \cite{Graef:website,Womersley:website} for a large range of
$t$. Another list for $\mathbb{S}^3$ is provided at
\cite{Womersley:website2}. We refer to \cite{HardinSloane96} for a
list of analytic $t$-designs on $\mathbb{S}^2$.

A major achievement in regard to $t$-design points is the proof of the
Korevaar-Meyers conjecture \cite{Korevaar93} by Bondarenko, Radchenko,
and Viazovska in 2013 \cite{Bondarenko:2011kx}, which had remained
open for 20 years. It verifies the existence of $t$-design points on
$\mathbb{S}^d$ with cardinality  $n\leq C_d t^d$ for some dimensional constant $C_d>0$.

Most questions about $t$-design points are meaningful, useful, and
interesting for $t$-design curves, but so far only very few results
are available. 
For instance, 
we know that there is a constant $c_d>0$ such that the arc length of
every $t$-design curve $\gamma$ in $\sd$ is  bounded below by 
	\begin{equation}\label{eq:asympt lower bound curve}
		\ell(\gamma) \geq c_d  t^{d-1}\,,
	\end{equation}
cf.~\cite[Thm.~2.2]{EG:2023}. The  analogue of the Korevaar-Meyers
conjecture for curves asks for the  existence of  a sequence of
$t$-design curves $(\gamma^{(t)})_{t\in\N}$ whose lengths grow at most
as $t^{d-1}$. The constructions of $t$-design curves in \cite{EG:2023} and
\cite{Lindblad1} affirm the Korevaar-Meyers conjecture for curves in
dimension $d=2$ and $d=3$. 
 So far only a handful of
explicit  examples  has been found.

A natural first idea for the construction of $t$-design curves is to
connect a set of $t$-design points along some curve and hope that the
resulting curve satisfies \eqref{tdesign1}. Proceeding in this manner,
one could build on the extensive collection of $t$-design points and
avoid building a theory from scratch.

Our objective is to explore several facets of this idea and obtain a
better grasp of what may be true and what not.

(i) We will discuss an example of a $t$-design curve with an explicit
analytic expression that contains $t$-design points in its
trace. However, it remains utterly  mysterious how to connect $t$-design
points  to obtain a $t$-design curve. The curve of Example~\ref{ex:odd} seems to be
a result of sheer luck. 

(ii) So the next  idea is to connect points along geodesic arcs, which
in $\S ^2$ are segments of great circles, and hope that the resulting
curve is a $t$-design curve.  In other words, we search $t$-design
curves in the form of geodesic cycles.   This idea fails already for the simplest $2$-design
points. Example~\ref{ex:S2} shows that the geodesic cycle obtained by
connecting the points of a regular tetrahedron  is not a
$2$-design.   

To improve on this idea, we use a geodesic cycle with $t$-design
points as control points for the initialization of a numerical
optimization with respect to the control points. Geometrically, we
deform the control points corresponding to  $t$-design points until an
error functional is zero, in which case we have a candidate for a
geodesic 
$t$-design cycle. In a second step we beautify the numerical
construction by reducing the number of parameters and proving
rigorously the existence of a $t$-design cycle. This procedure is
familiar in the construction of $t$-design points~\cite{Sloane:2003zp}, but seems
to be significantly more difficult for curve, both numerically and
analytically. So far, we have been successful to construct $t$-design
cycles only for $t=2$ and $t=3$. See Figures~\ref{fig:2chain} and
\ref{fig:3chain} for an illustration of these results.  

To the best of our knowledge, this construction  provides the only
 currently known geodesic $2$- and $3$-design cycles on $\mathbb{S}^2$
 that are free of self-intersections. 

 \begin{figure}
\subfigure[geodesic $2$-design cycle]{
\includegraphics[width=.3\textwidth]{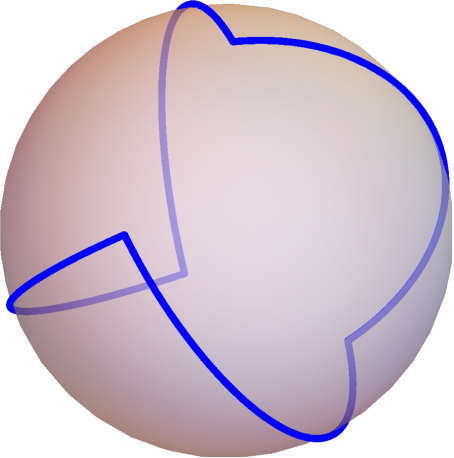}
\label{fig:2chain}}
\subfigure[geodesic $3$-design cycle]{                                                                                                 
\includegraphics[width=.3\textwidth]{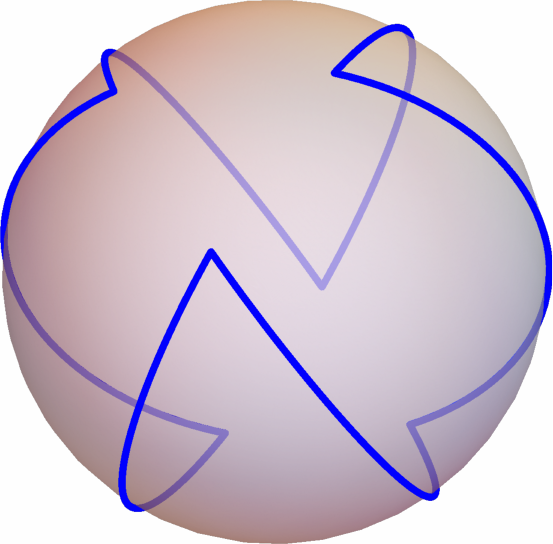}
\label{fig:3chain}}
\subfigure[geodesic $3$-design cycle]{
\includegraphics[width=.3\textwidth]{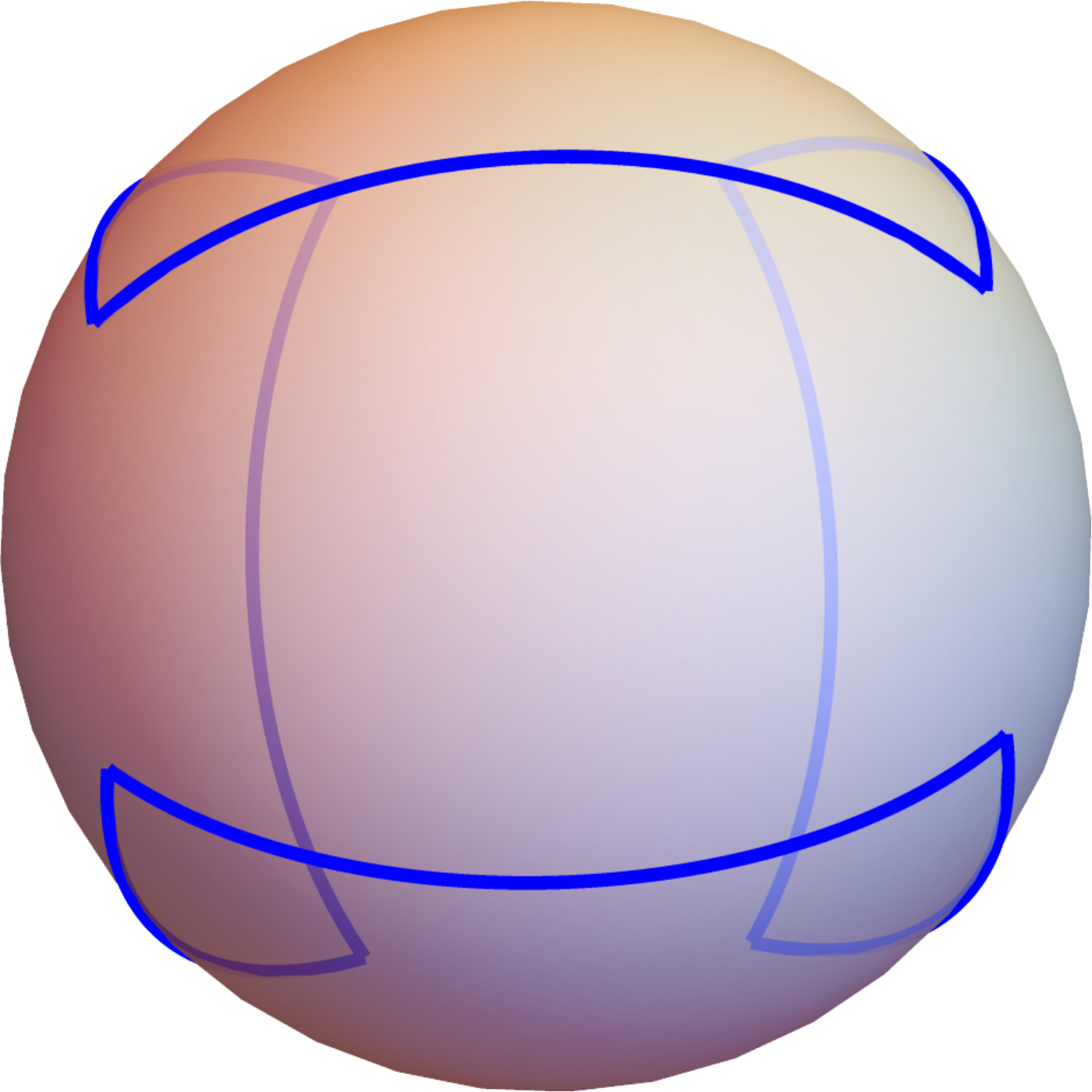}
\label{fig:3 design candidate cube}}
\caption{Visualizations of the geodesic $t$-design cycles constructed in Section \ref{sec:num}.}\label{fig:candidates}
\end{figure}

(iii) Although the initial idea fails, it raises the following question: What do we
actually obtain, if we connect $t$-design points by geodesic arcs? We
answer this question in the following sense (and in arbitrary
dimension). If the points are distributed sufficiently densely, as is
the case in the abstract construction of $t$-designs in~\cite{Bondarenko:2011kx}, then
the resulting geodesic cycle satisfies \mz\ inequalities. The analytic
result can be stated as follows.

\begin{tm}\label{thm:mz intro}
There are constants $0<A_d, B_d,C_d<\infty$ and a sequence of geodesic
cycles $(\gamma^{(t)})_{t\in\bN}$ in $\sd$ with the following
properties:

(i)  For all $p\in
[1,\infty ]$ and all degrees  $t\in\N$, the norm equivalence  
\begin{align*}
A_d\|f\|_{L^p(\sd)} \leq  \| f\|_{L^p(\gamma^{(t)})}   \leq B_d \|f\|_{L^p(\sd)}\,
\end{align*}
holds for all algebraic polynomials $f$ of $d+1$
variables of degree $t$, and

(ii) the length of the curves is bounded by
$$
\ell(\gamma^{(t)})  \leq C_{d} t^{d-1}\,.
$$
\end{tm}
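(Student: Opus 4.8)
The plan is to leverage the abstract $t$-design point construction of Bondarenko, Radchenko, and Viazovska \cite{Bondarenko:2011kx} together with the fact that sufficiently dense, well-separated point sets on $\sd$ automatically yield Marcinkiewicz--Zygmund inequalities, and then to upgrade the point inequality to a curve inequality by connecting the points with geodesic arcs whose total length is controlled. First I would recall that \cite{Bondarenko:2011kx} produces, for each $t$, a set of $t$-design points $X^{(t)} = \{x_1,\dots,x_n\}$ with $n \leq C_d t^d$; by a standard thinning argument one may further assume the points are $\delta$-separated with $\delta \asymp t^{-1}$ while remaining an $\eta/t$-net for a suitable small $\eta$ (this separation/covering structure is implicit in the BRV construction, or can be arranged by an area-regular partition of $\sd$ into $\asymp t^d$ cells of diameter $\asymp t^{-1}$ and picking one design-related point per cell). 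The point here is purely geometric: I want a family of points that is simultaneously a quadrature rule \emph{and} a net at the natural scale $1/t$.

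Next I would invoke the classical Marcinkiewicz--Zygmund machinery for polynomials of degree $t$ on $\sd$: if $\{x_j\}$ is an $\varepsilon/t$-net (with $\varepsilon$ small enough depending only on $d$), then for every spherical polynomial $f$ of degree $t$ one has $\|f\|_{L^p(\sd)} \asymp \big(\sum_j w_j |f(x_j)|^p\big)^{1/p}$ with weights $w_j \asymp t^{-d}$ (for $p<\infty$; the $p=\infty$ case is the trivial sup-norm comparison, since a net at scale $1/t$ together with the Bernstein/Nikolskii inequality $\|\nabla f\|_\infty \lesssim t\|f\|_\infty$ recovers the full sup norm up to a constant). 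This is exactly the content of the Marcinkiewicz--Zygmund theory on the sphere; the weights are, up to constants, the areas of the partition cells. The key step is now to replace the discrete sum over points by the path integral over the geodesic cycle $\gamma^{(t)}$ that visits $x_1,\dots,x_n$ in some order along shortest geodesic arcs: since each arc has length $\lesssim t^{-1}$ and there are $\lesssim t^d$ of them, $\ell(\gamma^{(t)}) \lesssim t^{d-1}$, giving (ii) for free. For (i), I would estimate the difference between $\frac{1}{\ell(\gamma)}\int_\gamma |f|^p$ and the weighted point sum: along each arc $\alpha_j$ (of length $\lesssim t^{-1}$) the function $|f|^p$ varies by at most $\|\,|f|^p\,'\|_{\infty,\alpha_j} \cdot \ell(\alpha_j) \lesssim t^{-1}\cdot p\, t\, \|f\|_{\infty}^{p} \lesssim \|f\|_{\infty}^p$ using the Bernstein inequality again — but this crude bound is not yet good enough, so the refinement is to compare $\frac{1}{\ell(\alpha_j)}\int_{\alpha_j}|f|^p$ to $|f(x_j)|^p$ and sum: the relative error on each arc is $O(\ell(\alpha_j)\cdot t) = O(1)$ times a localized quantity, and summing against the weights $w_j\asymp t^{-d}\asymp \ell(\alpha_j)$ one gets that $\|f\|_{L^p(\gamma^{(t)})}$ is sandwiched between constant multiples of $\|f\|_{L^p(\sd)}$, with constants depending only on $d$ (and, after checking, not on $p$, because Bernstein's constant is $p$-independent once one differentiates $|f|^p$ correctly as $p|f|^{p-2}\,\Real(\bar f\,\partial f)$ and uses Hölder back into $L^p$).

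The main obstacle I anticipate is making the $L^p$ comparison \emph{uniform in $p\in[1,\infty]$} with constants depending only on $d$: the naive arc-by-arc Taylor estimate introduces factors that could degrade as $p\to\infty$ or as $p\to 1$. The clean way around this is to avoid $|f|^p$ entirely on each arc and instead prove a single $L^\infty$-type oscillation estimate, namely that for any degree-$t$ polynomial $f$ and any point $y$ on the arc $\alpha_j$, $|f(y)| \leq (1+C_d\eta)|f(x_j)| + C_d\eta\, \|f\|_{L^\infty(\sd\cap B(x_j,\rho/t))}$ for a local ball, via Bernstein, and then integrate; combined with the already-known MZ inequality for the \emph{point} family at a slightly larger degree (a standard trick: prove the curve MZ inequality for degree $t$ using the point MZ inequality for degree $2t$ or so, absorbing the oscillation term), one gets the two-sided bound with purely dimensional constants. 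Thus the logical skeleton is: (1) extract a separated net of $t$-design points from \cite{Bondarenko:2011kx} with $n\asymp t^d$; (2) connect them by shortest arcs to form $\gamma^{(t)}$ with $\ell(\gamma^{(t)})\lesssim t^{d-1}$, giving (ii); (3) quote the classical point MZ inequality on $\sd$; (4) pass from the weighted point sum to the path integral via a uniform-in-$p$ Bernstein oscillation estimate on each short arc, absorbing the error using the MZ inequality at a comparable degree, to obtain (i).
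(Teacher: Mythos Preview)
Your overall architecture matches the paper's: produce $\asymp t^d$ well-distributed points carrying an equal-weight point MZ inequality, thread a geodesic cycle through them, and transfer the inequality to the curve. But there is a genuine gap in your step (2). You assert that connecting the points ``in some order along shortest geodesic arcs'' gives $\ell(\gamma^{(t)})\lesssim t^{d-1}$ because ``each arc has length $\lesssim t^{-1}$''; this holds only if consecutive points in the chosen ordering are at distance $\lesssim t^{-1}$, which amounts to finding a Hamiltonian-type path in the neighbor graph and is not automatic. The paper supplies precisely this missing ingredient: it works directly with an area-regular partition $\{R_1,\dots,R_n\}$ (not with BRV design points --- the $t$-design property is never used), forms the adjacency graph of the patches, \emph{doubles} every edge so that all vertex degrees are even, and extracts an Euler cycle, which then exists by Euler's criterion. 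A kissing-number bound (Lemma~\ref{lemma:kiss me}) shows this graph has $\asymp n$ edges, each corresponding to an arc of length $\lesssim n^{-1/d}$, so the resulting cycle has length $\asymp n^{1-1/d}\asymp t^{d-1}$.

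For the transfer in your step (4), the Bernstein-oscillation route is plausible but, as you yourself anticipate, making it uniform in $p$ is delicate. The paper bypasses this with a much cleaner device: inside each patch $R_j$ it places a short arc $\gamma_j$ of fixed length $c_{\mathrm{in}}n^{-1/d}$ lying in a convex ball contained in $R_j$, and applies the \emph{mean value theorem for integrals} to $|f|^p$ along $\gamma_j$, producing a point $y_j\in R_j$ with $\int_{\gamma_j}|f|^p=\ell(\gamma_j)\,|f(y_j)|^p$. Since $y_j\in R_j$, the point MZ inequality (Theorem~\ref{thm1}) applies to $\{y_j\}_{j=1}^n$ with constants $(1\pm\varepsilon)^p$, regardless of which $f$ produced the $y_j$. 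The connecting arcs between adjacent patches are treated the same way, with the kissing-number bound controlling how many of the resulting mean-value points can land in any one patch. There is no differentiation of $|f|^p$ and no Bernstein inequality; the only $p$-dependence is the harmless $(1\pm\varepsilon)^p$, and fixing $\varepsilon=\tfrac12$ together with $A_d\le 1\le B_d$ yields the uniform-in-$p$ statement of Theorem~\ref{thm:mz intro}.
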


Note that Theorem \ref{thm:mz intro} establishes \mz\ inequalities for sequences
of curves whose length matches  the lower  bounds
in \eqref{eq:asympt lower bound curve}.

\mz\ inequalities for points have been studied intensively in
approximation theory. For constructions and density theorems on the
sphere we refer to ~\cite{FilbirNew,Bondarenko:2011kx,Bondarenko:2015eu,marzo07,marzo08,marzo14,Mhaskar:2002ys}. In fact, our construction of \mz\
inequalities for curves is based on the existence of \mz\
points. Recently \mz\ inequalities have resurfaced in the context of
discretization of $L^p$-norms~\cite{temlyakov}. In this context,
Theorem~\ref{thm:mz intro} assures that the $L^p$-norm on a sphere can be captured
along a curve (instead of a finite set of points). To the  best of our
knowledge, Theorem~\ref{thm:mz intro} is the first result about \mz\ inequalities
for curves.

\subsection*{Outline}
The outline is as follows: In Section \ref{sec:design}, we recall the concepts of $t$-design points and curves and introduce geodesic $t$-design cycles. 
In Section \ref{sec:num}, we derive three geodesic $t$-design cycles
for $t=2,3$ that are free of self-intersections. The hidden part of
this work consists of numerical and symbolic computations. For
the symbolic computations we used Mathematica~\cite{mathematica}. Most formulas that come
with the epithet ``a computation leads to \dots `` were
obtained in this way. 
In Section \ref{sec:mz}, we prove Theorem~\ref{thm:mz intro} and
construct  a sequence of geodesic $t$-design cycles that satisfy \mz\ inequalities and whose lengths grow as $t^{d-1}$.

\section{Some Spherical $t$-Design Curves for Small $t$}\label{sec:design}
For $t\in\N$, let $\Pi_t$ be the collection of all polynomials with
real coefficients in $d+1$ variables of total degree at most $t$. The
unit $d$-sphere is denoted by 
\begin{equation*}
	\S^d=\{x\in\R ^{d+1}:\|x\|=1\}\,,\qquad d=2,3,\ldots.
\end{equation*}
We normalize its standard surface measure, so that $\int_{\sd}1 = 1$. 

As introduced by Delsarte, Goethals, and Seidel in the seventies \cite{Delsarte:1977aa,Seidel:2001aa}, a finite set $\{x_1,\ldots,x_n\}\subseteq\S^d$ is called a (spherical) \emph{$t$-design},  if
\begin{equation}\label{eq:t d def}
        \frac{1}{n}\sum_{j=1}^n f(x_j) = 	\int_{\S^d}
        f\, ,\qquad \text{for all } f\in\Pi_t\,.
\end{equation}
This concept gave rise to a rich field of research about explicit analytic, algebraic, and numerical constructions, \cite{Womersley:2018we,Danev2001,Harpe:2004,Reznick:1995,Sloane:2003zp} to name a few.

\subsection{Spherical $t$-design curves}
The analogous concept for curves has been introduced only recently in
\cite{EG:2023}. Here, the term curve always refers to a continuous,
piecewise smooth function $\gamma:[0,1]\rightarrow \sd$ of finite
arc-length  $\ell(\gamma)$ 
that is closed so that $\gamma(0)=\gamma(1)$.  We allow self
intersections, but in contrast to \cite{EG:2023},  the curve could
even traverse arcs  multiple times. Given a continuous function $f$ on
$\sd $ and a curve
$\gamma$,  the path integral is 
\begin{equation*}
\int_\gamma f = \int_0^1 f(\gamma(s))\|\dot{\gamma}(s)\|\mathrm{d}s\,
, 
\end{equation*}
and $\ell (\gamma ) = \int _\gamma 1$ is the arc length of $\gamma $. 
Given $f\in\Pi_t$ and a curve $\gamma$, we compare $\int _\gamma f$
with the integral $\int _{\sd } f$  over the entire sphere.

We call $\gamma$ a (spherical) $t$-design curve if 
\begin{equation*}
\frac{1}{\ell(\gamma)}\int_\gamma f = 	\int_{\S^d}
        f \,,\qquad \text{for all } f\in\Pi_t\,.
\end{equation*}

Some initial examples of smooth  $1$, $2$, and $3$-design curves on $\S^2$ have been derived in \cite{EG:2023}. 
 \begin{exmp}[Smooth $t$-design curves in $\mathbb{S}^2$]\label{ex:S2}
 For $t=1,2,3$, consider the curves $\gamma^{(t,a)}:[0,1]\rightarrow\S^2$, 
\begin{equation}\label{eq:gamma curve 0}
	\gamma^{(t,a)}(s):=
		\begin{pmatrix}
			a\cos(2\pi s)+(1-a)\cos(2\pi (2t-1) s)\\
			a\sin(2\pi s)-(1-a)\sin(2\pi (2t-1) s)\\
			2\sqrt{a(1-a)}\sin(2\pi t s)
		\end{pmatrix}.
\end{equation}
The curve $\gamma^{(1,a)}$ is a great circle. Every great circle on
the sphere $\S^d$ is a $1$-design curve and it is easy to show that
great circles are  shortest among all $1$-design curves. 

The $t$-design property of $\gamma ^{(t,a)}$  was proved in \cite[Prop.~3.1]{EG:2023}:
\emph{for  $t=2$ and $t=3$ there exist parameters
  $a_2,a_3\in(\frac{1}{2},1)$ such that $\gamma^{(2,a_2)}$ and
  $\gamma^{(3,a_3)}$ are spherical $2$- and $3$-design curves,
  respectively. }

Both curves are simple, meaning they have no self-intersections. As a result, they divide the sphere into two distinct regions, as shown in Figure \ref{fig:approx 3 design}. Interestingly, we observe that these two regions have equal areas.
\begin{prop}
For $t=2$, $t=3$, and $a\in [0,1]$, $\gamma ^{(t,a)}$ partitions $\S
^2$ into two regions of equal area.   
\end{prop}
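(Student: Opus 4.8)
The plan is to find, for each of $t=2$ and $t=3$, an area-preserving self-map $\Phi$ of $\S^2$ that carries the trace of $\gamma^{(t,a)}$ onto itself and has no fixed point on $\S^2$; the equal-area statement then follows from a short topological argument. Concretely I would use the orthogonal maps
\begin{align*}
 t=2:\quad & \Phi(x,y,z)=(-y,\,x,\,-z), & \Phi\big(\gamma^{(2,a)}(s)\big)&=\gamma^{(2,a)}\!\big(s+\tfrac14\big),\\
 t=3:\quad & \Phi(x,y,z)=(-x,-y,-z), & \Phi\big(\gamma^{(3,a)}(s)\big)&=\gamma^{(3,a)}\!\big(s+\tfrac12\big).
\end{align*}
The first step is to verify these two intertwining identities: writing $\theta=2\pi s$ and substituting $\theta\mapsto\theta+\tfrac{\pi}{2}$ (resp.\ $\theta\mapsto\theta+\pi$), one uses that the frequency $2t-1$ is odd, together with the parity of $t$ in the height coordinate $2\sqrt{a(1-a)}\sin(t\theta)$. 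These are routine trigonometric identities, valid for every $a$.

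Next I would record three elementary facts. First, both maps $\Phi$ are restrictions to $\S^2$ of orthogonal maps of $\R^3$, hence are isometries of $\S^2$ and in particular preserve the normalized surface measure. Second, since $s\mapsto s+\tfrac14$ (resp.\ $s\mapsto s+\tfrac12$) merely permutes the parameter interval $[0,1]$, the intertwining identities show that $\Phi$ maps the trace $\Gamma:=\gamma^{(t,a)}([0,1])$ onto itself. Third, $\Phi$ has no fixed point on $\S^2$: for $t=3$ this is the familiar property of the antipodal map, and for $t=2$ the equations $-y=x$, $x=y$, $-z=z$ force $x=y=z=0\notin\S^2$. It is worth noting that the ``obvious'' symmetries of $\gamma^{(2,a)}$ — the $\pi$-rotations about the coordinate axes — are of no use here: each has fixed points on $\S^2$, and the rotation about the $z$-axis in fact maps each complementary region to itself rather than interchanging the two; the point is to spot the fixed-point-free order-four map $(x,y,z)\mapsto(-y,x,-z)$.

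Granting that $\Gamma$ separates $\S^2$ into exactly two regions $R_1,R_2$ whose closures are topological disks — which holds whenever $\gamma^{(t,a)}$ is a Jordan curve, by the Jordan--Schoenflies theorem on the sphere, and which is immediate at the degenerate endpoints $a\in\{0,1\}$, where $\Gamma$ is a great circle — I would conclude as follows. Because $\Phi(\Gamma)=\Gamma$, the homeomorphism $\Phi$ permutes the pair $\{R_1,R_2\}$. If it fixed one, say $\Phi(R_1)=R_1$, then $\Phi$ would restrict to a continuous self-map of the closed disk $\overline{R_1}$, and Brouwer's fixed point theorem would yield a fixed point of $\Phi$ inside $\overline{R_1}\subseteq\S^2$, contradicting the previous paragraph. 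Hence $\Phi(R_1)=R_2$, and since $\Phi$ preserves area, $\mathrm{area}(R_1)=\mathrm{area}(R_2)$; as these add up to $1$, each equals $\tfrac12$.

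The trigonometric verifications in the first step are routine, and the symmetry/Brouwer argument is short; the only place that requires real care is the topological input, namely confirming that for every $a\in(0,1)$ the curve $\gamma^{(t,a)}$ does bound exactly two disk-like regions (equivalently, controlling its self-intersections) — that is the step I would expect to be the main obstacle.
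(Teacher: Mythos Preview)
Your approach is correct and genuinely different from the paper's. The paper proceeds via the Gauss--Bonnet formula $A(\gamma)=\tfrac12-\int_\gamma k_g$: it computes the geodesic curvature of $\gamma^{(t,a)}$ explicitly (with the aid of Mathematica) and then notices that the resulting integrand has the form $\sin(2t\cdot 2\pi s)$ times a $\tfrac{1}{2t}$-periodic function, so the integral over $[0,1]$ vanishes. Your argument bypasses the curvature computation altogether by exhibiting a fixed-point-free isometry of $\S^2$ that preserves the trace of the curve and then invoking Brouwer; it is more elementary (no differential geometry, no symbolic computation) and lets the underlying symmetry do all the work directly. Interestingly, the symmetry you use for $t=2$ is essentially the same parameter shift $s\mapsto s+\tfrac14$ that the paper exploits \emph{after} computing $k_g$ in order to kill the curvature integral --- so the two proofs are cousins, the paper just routes the symmetry through Gauss--Bonnet rather than through Brouwer. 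Both arguments rely equally on the curve being simple for the ``two regions'' statement to make sense; the paper asserts simplicity without proof, so your candid remark that this topological input is ``the main obstacle'' is fair and not a defect relative to the original.
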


\begin{proof}
  Due to our normalization $|\mathbb{S}^2|=1$, this  means that the area of both regions is $\frac{1}{2}$. 

To verify this  observation rigorously, we recall that the integral of the geodesic curvature $k_g$ along a smooth curve $\gamma:[0,1]\rightarrow\S^2$ may be computed by 
\begin{equation*}
\int_\gamma k_g =\frac{1}{4\pi} \int_0^1 \frac{\langle \ddot{\gamma}(s),\gamma(s)\times\dot{\gamma}(s)\rangle}{\|\dot{\gamma}(s)\|^2}\mathrm{d}s\,,
\end{equation*}
cf.~\cite[Sections 17.4 and 27.1]{Gray}. 
The Gauss-Bonnet Theorem applied to curves on $\S^2$ yields that the enclosed area $A(\gamma)$ is determined by 
\begin{equation}\label{eq:GB}
A(\gamma) = \frac{1}{2} - \int_\gamma k_g\,.
\end{equation}
Using the parametrization $\gamma=\gamma^{(2,a)}$, elementary Mathematica computations reveal 
{\small 
\begin{equation*}
k_g = \frac{\langle \ddot{\gamma}(s),\gamma(s)\times \dot{\gamma}(s)\rangle}{\|\dot{\gamma}(s)\|^2} = 
-4 \pi  \sqrt{(1-a) a} \sin (4 \pi  s)\frac{2 a^2-2 (a-1) a \cos (8 \pi  s)-14 a+15}{2 a^2-2 (a-1) a \cos (8 \pi  s)-10 a+9}\,.
\end{equation*}
}
Since $\sin(4\pi s)$ is multiplied by a function whose period is $1/4$, the standard formula $\sin(4\pi (s+\frac{1}{4}))=-\sin(4\pi s)$ implies that the integral over $[0,1]$ vanishes.
Hence, we derive $A(\gamma)=\frac{1}{2}$ and the normalization $|\mathbb{S}^2|=1$ implies that $\gamma$ divides $\mathbb{S}^2$ into two regions of equal area. 

The parametrization $\gamma=\gamma^{(3,a)}$ is dealt with in a similar fashion, and we obtain
{\small 
\begin{equation*}
k_g = \frac{\langle \ddot{\gamma}(s),\gamma(s)\times \dot{\gamma}(s)\rangle}{\|\dot{\gamma}(s)\|^2} = 
-8 \pi  \sqrt{(1-a) a} \sin (6 \pi  s)\frac{2 a^2-2 (a-1) a \cos (12 \pi  s)-11 a+10}{2 a^2-2 (a-1) a \cos (12 \pi  s)-8 a+25/4}\,,
\end{equation*}
}
and again we  conclude that $\int_\gamma k_g=0$. 
\end{proof}

Although we are dealing with a family of smooth curves, there is a
hidden relation to $t$-design points. To see this, recall that the
vertices of a smooth curve on the sphere are the local extrema of its
geodesic curvature. For $t=2$ and $\gamma ^{(2,a)}$, there are four vertices located at
$s_j=\frac{2j-1}{8}$, $j=1,\ldots,4$. For the parameter $a_S=\frac{1}{2}+\frac{1}{\sqrt{6}}$, we have checked that the $4$
points $\{\gamma^{(2,a_S)}(\tfrac{2j-1}{8})\}_{j=1}^4$ form a
$2$-design given by the vertices of a (regular) tetrahedron, but 
$\gamma^{(2,a_S)}$ is not a $2$-design curve.

By contrast, for the special parameter $a_2$ the corresponding curve
is a $2$-design curve, but its vertices are not $2$-design points and we did not find any other $2$-design points that lie on the curve.

Likewise, for $t=3$, there are six vertices located at
$s_j=\frac{2j-1}{12}$, $j=1,\ldots,6$. The parameter choice
$a_O=\frac{1}{2}$ leads to the six points
$\{\gamma^{(3,a_O)}(\tfrac{2j-1}{12})\}_{j=1}^6$, and we checked that
they form a $3$-design as the vertices of an octahedron, but the
corresponding curve is not a $3$-design curve. Again, for the special
parameter $a_3 \neq a_O$ one obtains a $3$-design curve, but its
vertex set does not contain $3$-design points and we did not find any other $3$-design points that lie on the curve.

\begin{figure}
\subfigure[$\gamma^{(2,a_2)}$ with $a_2 \approx 0.7778$]{
\includegraphics[width=.3\textwidth]{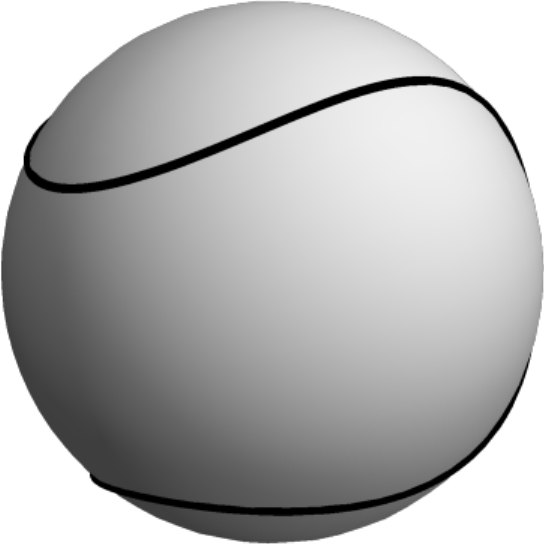}\label{fig:smooth 2}}\hspace{6ex}
\subfigure[$\gamma^{(3,a_3)}$ with $a_3\approx 0.7660$]{
\includegraphics[width=.3\textwidth]{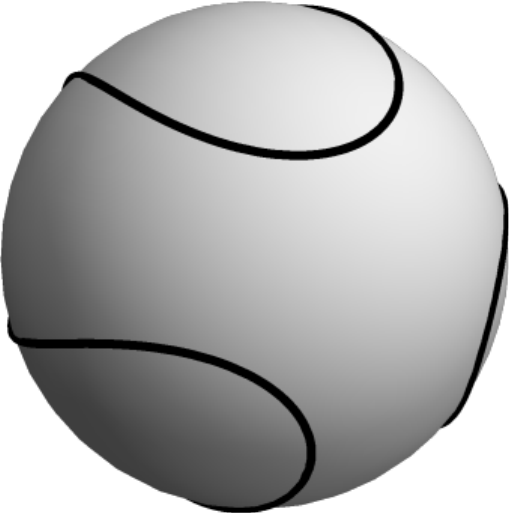}\label{fig:smooth 3}}
\caption{Curves in Example \ref{ex:S2}.}
\label{fig:approx 3 design}
\end{figure}
\end{exmp}
At this time, the curves $\gamma^{(2,a_2)}$ and $\gamma^{(3,a_3)}$ are the
only known  smooth, simple $2$- and $3$-design curves. So far, we   do not know any smooth $4$-design curve in $\S^2$. 

Some examples of  $2$- and $3$-design curves in
higher-dimensional spheres can be constructed as follows. % $\S^{2m-1}$.

\begin{exmp}[Smooth $t$-design curves in $\mathbb{S}^{2m-1}$]\label{ex:odd}
Let $c(s)=\begin{pmatrix} \cos(2\pi s)\\ \sin(2\pi s) \end{pmatrix}$
be  the circle traversed counter-clockwise, and consider the curves $\gamma^{(1)}(s)=\frac{1}{\sqrt{m}} \left(c(s),\ldots,c(s)\right)\in\mathbb{S}^{2m-1}$ and 
\begin{equation}\label{eq:1 curve}
\gamma^{(2)}(s):=\frac{1}{\sqrt{m}} 
		\begin{pmatrix}
			c(s)\\
			c(2s)\\
			\vdots\\
			c(ms)
		\end{pmatrix},\qquad 
	\gamma^{(3)}(s):=\frac{1}{\sqrt{m}} 
		\begin{pmatrix}
			c(s)\\
			c(3 s)\\
			\vdots\\
			c((2m-1)s)
		\end{pmatrix}\,.
\end{equation}
Then $\gamma^{(t)}$ is a $t$-design curve for $t=1,2,3$, and $\gamma^{(1)}$ is a great circle. 

We have checked that the points $\left\{\gamma^{(2)}(\tfrac{j}{2m+1})\right\}_{j=1}^{2m+1}$ are a $2$-design, namely the vertices of the $2m$-tetrahedron. The points $\left\{\gamma^{(3)}(\tfrac{j}{4m})\right\}_{j=1}^{4m}$ are a $3$-design and correspond to the vertices of the $2m$-dimensional cross-polytope (hyperoctahedron). 
\end{exmp}

Let us look at some subtle differences in Examples~\ref{ex:S2}
and~\ref{ex:odd}. The $t$-design curves in Examples \ref{ex:odd} connect $t$-design
points by a suitable curve. In this sense they confirm the idea that
one $t$-design curve could be obtained by suitably connecting
$t$-design points. By contrast,   in Example \ref{ex:S2} with special
values $a_2, a_3$, the
$t$-design curves pass through points that  are close to $t$-design
points, but these curves do \emph{not} contain $t$-design points.
In both cases, by sheer luck, we had an   analytic expression
for the curve. It remains unclear how to find such an expression.

We therefore now turn to geodesic cycles, which are geometrically
simpler. In geometry, the motion  along a geodesic is the canonical
way to connect two points.

\subsection{$t$-design curves consisting of geodesic cycles}

 In this section we start with a set of points, preferably $t$-design
 points, and then connect them with the goal of obtaining a $t$-design
 curve. The idea is to connect them  along the simplest possible curve
 in $\S ^2$, namely geodesic arcs.  This idea leads to the notion of geodesic chains and cycles.  

A geodesic chain   is a   curve on the sphere $\mathbb{S}^d$ that
connects a finite set of control points by geodesic arcs, and a
geodesic cycle is a \emph{closed} geodesic chain. Geodesic
chains  are the analogue of polygonal curves in Euclidean space.    

The length of the geodesic arc connecting $x$ and $y$ on $\S^d$ is measured by the rotation invariant metric on $\S ^d$, 
\begin{equation}\label{eq:dist def}
\dist (x,y) = \arccos\langle x,y \rangle\,.
\end{equation}
Unless $y\neq \pm x$,  such an arc is parametrized by $\gamma_{x,y}:[0,1]\rightarrow\S^d$,
\begin{equation*}
\gamma_{x,y}(s)= \frac{\sin((1-s)\dist (x,y))}{\sin(\dist (x,y))} x + \frac{\sin(s\dist (x,y))}{\sin(\dist (x,y))}y\,.
\end{equation*}
We note that $\sin(\dist (x,y))=\sqrt{1-\langle x,y \rangle^2}$ and that this parametrization has constant speed $\|\dot{\gamma}_{x,y}(s)\| = \dist (x,y)$.

The control points $x_1,\ldots,x_n\in \S^d$ induce the geodesic cycle
$\gamma=(\gamma_{x_j,x_{j+1}})_{j=1}^n$, where we additionally put
$x_{n+1}:=x_1$ to obtain a closed curve. We do not require that the control points are pairwise distinct and hence we allow geodesic arcs to occur multiple times. 

The path  integral over $\gamma=(\gamma_{x_j,x_{j+1}})_{j=1}^n$ is 
\begin{align*}
\int_\gamma f & = \sum_{j=1}^n \int_{\gamma_{x_j, x_{j+1}}}f\,.
\end{align*}
The length of $\gamma$ is $\ell(\gamma)=\sum_{j=1}^n
\ell(\gamma_{x_j,x_{j+1}}) = \sum_{j=1}^n \dist(x_j,x_{j+1})$ and is a
function of the control points only.

We call $\gamma$ a geodesic $t$-design cycle (or simply $t$-design curve) if 
\begin{equation*}
\frac{1}{\ell(\gamma)} \int f = \int_{\S^d} f\, ,\qquad \text{for all } f\in \Pi_t\,.
\end{equation*}

A natural idea for the construction of geodesic $t$-design cycles is
to use control points that are themselves $t$-design points. As
mentioned earlier, the vertices of the  regular tetrahedron
$\{x_1,x_2,x_3,x_4\}$ are $2$-design points on
$\mathbb{S}^2$. However, the geodesic cycle induced by these four
control points is not a $2$-design cycle and not even a $1$-design
cycle.

To obtain $t$-design curves, we perturb the control points and thus
deform a given cycle. For this,  we set up an iterative numerical
optimization that we initialize with $t$-design control points.

\section{Numerical constructions and beautification}\label{sec:num}

We perform a numerical optimization to derive candidates of simple geodesic $t$-design cycles. Called beautification 
in \cite{Sloane:2003zp}, we eventually aim to derive an analytic description of these candidates and prove that they satisfy the respective design properties. 

It seems  that the process of beautification  is more demanding for
curves  than for points. While the numerical minimization is comparable, finding the analytic description appears much harder. We are able to complete this beautification procedure for $t=2$ and $t=3$, but we obtain two numerical candidates of $5$-design cycles that we have not yet been able to beautify.

\subsection{Two-step procedure: numerical minimization and beautification}\label{sec:two}
A closed curve $\gamma$ is a $t$-design if and only if the linear form 
\begin{equation*}
Lf=\int_{\mathbb{S}^2} f - \frac{1}{\ell(\gamma)}\int_\gamma f
\end{equation*}
vanishes on $\Pi_t$. The norm of $L$
\begin{equation}\label{eq:wce}
\|L\|_t:=\sup_{\substack{f\in\Pi_t \\ \|f\|_{L^2(\mathbb{S}^2)}\leq 1}} \left|\int_{\mathbb{S}^2}f - \frac{1}{\ell(\gamma)}\int_\gamma f \right| 
\end{equation}
is the worst case integration error on $\Pi_t$. For numerical
optimization, we may use the following expression for the norm of $L$,
see \cite{Ehler:2019aa,SloanWom,Graf:2011lp} for related formulas
for points.
\begin{lemma}\label{lemma:num opt}
Let $\{P_l : l=0,1,\ldots\}$ be the family of Legendre polynomials,
normalized by $P_l(1)=1$. Given a curve $\gamma$, the norm of $L$ is
given by 
\begin{equation}\label{eq:L norm to be minimized}
\|L\|^2_t = \sum_{l=1}^ t \frac{2l+1}{|\ell(\gamma)|^2}\int_0^1 \int_0^1 P_l(\langle\gamma(r),\gamma(s)\rangle)\|\dot{\gamma}(r)\| \|\dot{\gamma}(s)\|\mathrm{d}r\mathrm{d}s\,.
\end{equation}
As a consequence, $\gamma$ is a $t$-design curve if and only if 
\begin{equation*}
\int_0^1 \int_0^1 P_l(\langle\gamma(r),\gamma(s)\rangle)\|\dot{\gamma}(r)\| \|\dot{\gamma}(s)\|\mathrm{d}r\mathrm{d}s = 0\,,\qquad l=1,\ldots,t\,.
\end{equation*}
\end{lemma}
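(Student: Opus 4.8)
The plan is to expand the worst-case error \eqref{eq:wce} in an orthogonal basis adapted to the rotation-invariant structure of the sphere, and then use the addition theorem for spherical harmonics to recognize the resulting sums as integrals of Legendre polynomials along the curve. First I would decompose $\Pi_t$ restricted to $\S^2$ into the direct sum $\bigoplus_{l=0}^t \mathcal{H}_l$ of spaces of spherical harmonics of exact degree $l$, each equipped with an orthonormal basis $\{Y_{l,m} : m=1,\ldots,2l+1\}$ in $L^2(\S^2)$. For each fixed $l$, the functional $f \mapsto \int_{\S^2} f - \frac{1}{\ell(\gamma)}\int_\gamma f$ annihilates constants (since $\int_\gamma 1 = \ell(\gamma)$), so on $\mathcal{H}_0$ the error is zero and the $l=0$ term drops out; on each $\mathcal{H}_l$ with $l\geq 1$ one has $\int_{\S^2} Y_{l,m} = 0$, so the functional reduces to $f \mapsto -\frac{1}{\ell(\gamma)}\int_\gamma f$.

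Next, since the operator norm of a linear functional on a Hilbert space is the Hilbert-space norm of its Riesz representer, and since the subspaces $\mathcal{H}_l$ are mutually orthogonal, $\|L\|_t^2$ splits as a sum over $l=1,\ldots,t$ of $\frac{1}{\ell(\gamma)^2}\sum_{m=1}^{2l+1} \bigl|\int_\gamma Y_{l,m}\bigr|^2$. Writing out each square as a double path integral, $\bigl|\int_\gamma Y_{l,m}\bigr|^2 = \int_0^1\int_0^1 Y_{l,m}(\gamma(r))\overline{Y_{l,m}(\gamma(s))}\,\|\dot\gamma(r)\|\,\|\dot\gamma(s)\|\,\mathrm{d}r\,\mathrm{d}s$, and summing over $m$, the addition theorem for spherical harmonics on $\S^2$ gives $\sum_{m=1}^{2l+1} Y_{l,m}(x)\overline{Y_{l,m}(y)} = (2l+1)\,P_l(\langle x,y\rangle)$ with the normalization $P_l(1)=1$ and the surface measure normalized to total mass one. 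Substituting $x=\gamma(r)$, $y=\gamma(s)$ yields exactly \eqref{eq:L norm to be minimized}. The "as a consequence" part is then immediate: each term of the sum, being $\frac{2l+1}{\ell(\gamma)^2}$ times the squared $L^2$-norm of the component of the representer in $\mathcal{H}_l$, is nonnegative, so $\|L\|_t = 0$ — equivalently $\gamma$ is a $t$-design curve — if and only if every one of the $t$ double integrals vanishes.

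The only genuine subtlety, rather than obstacle, is bookkeeping of normalizations: one must fix the convention that the surface measure has total mass $1$ (as declared in the excerpt) and track the corresponding constant in the addition theorem so that the factor comes out as $2l+1$ and not, say, $\frac{2l+1}{4\pi}$; the normalization $P_l(1)=1$ is what makes $\sum_m |Y_{l,m}(x)|^2 = (2l+1)P_l(1) = 2l+1$ consistent with $\dim \mathcal{H}_l = 2l+1$. A secondary point worth a line of justification is the interchange of the finite $m$-sum with the double integral (trivial, finite sum) and the fact that $\int_\gamma Y_{l,m}$ is well defined because $\gamma$ is piecewise smooth of finite arc length and $Y_{l,m}$ is continuous on $\S^2$. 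Everything else is a direct assembly of standard facts, so I expect the proof to be short.
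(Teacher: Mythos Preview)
Your proposal is correct and follows essentially the same route as the paper: compute $\|L\|_t^2$ via the Riesz representer in the orthonormal basis of spherical harmonics, drop the $l=0$ term because $L$ annihilates constants and $\int_{\S^2}Y_{l,m}=0$ for $l\ge 1$, then apply the addition formula to collapse the $m$-sum into $(2l+1)P_l(\langle\gamma(r),\gamma(s)\rangle)$; the nonnegativity of each $l$-term gives the ``if and only if'' part. The only cosmetic difference is your indexing $m=1,\dots,2l+1$ versus the paper's $m=-l,\dots,l$, which is immaterial.
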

\begin{proof}
We only need to consider the restriction of the polynomials $\Pi_t$ to
the sphere. For an arbitrary orthonormal basis $\{\varphi_k\}$ of
$\Pi_t|_{\mathbb{S}^2}\subseteq L^2(\S^2)$, the Riesz representative of $L$ is $v_L=\sum_k (L\varphi_k)\varphi_k\in \Pi_t|_{\mathbb{S}^2}$ and the norm of $L$ satisfies 
\begin{equation}\label{eq:norm of L}
\|L\|_t = \|v_L\|=\left(\sum_k| L\varphi_k|^2\right)^{1/2} \, .
\end{equation}
We choose the natural orthonormal basis for $\Pi_t|_{\mathbb{S}^2}$, namely the spherical harmonics up to degree $t$ denoted by  
$\{Y_{l,m}:|m|\leq l\,, \, l=0,\ldots,t\}$. According to \eqref{eq:norm of L} the integration error is 
\begin{equation*}
\|L\|^2_t = \sum_{l=0}^ t \sum_{m=-l}^{l} \left| \int_{\mathbb{S}^2}Y_{l,m}-\frac{1}{\ell(\gamma)}\int_\gamma Y_{l,m}  \right|^2\,.
\end{equation*}
Since $Y_{0,0}\equiv 1$ and thus $\int_{\S^2} Y_{l,m} = \langle Y_{l,m},Y_{0,0}\rangle_{L^2(\S^2)}=0$ for $l>0$ and $\frac{1}{\ell(\gamma)}\int_\gamma 1 = \int_{\S^2} 1$ by definition, the integration error can be expressed as
\begin{align*}
\|L\|_t^2 &= 
 \sum_{l=1}^ t \sum_{m=-l}^{l} \left|\frac{1}{\ell(\gamma)}\int_\gamma Y_{l,m}\right|^2\\
& = \sum_{l=1}^ t \sum_{m=-l}^{l} \frac{1}{|\ell(\gamma)|^2}\int_0^1 \int_0^1 Y_{l,m}(\gamma(r))Y_{l,m}(\gamma(s))  \|\dot{\gamma}(r)\| \|\dot{\gamma}(s)\|\mathrm{d}r\mathrm{d}s\,.
\end{align*}
We use the well-known addition formula of the spherical harmonics in terms of Legendre polynomials,
\begin{equation}\label{eq:add formula}
 \sum_{m=-l}^{l}Y_{l,m}(x)Y_{l,m}(y) = (2l+1) P_l(\langle x,y\rangle)
 \, ,\qquad \text{for all }x,y\in\mathbb{S}^2\,,
\end{equation}
and  further rewrite the integration error as
\begin{equation*}
\|L\|_t^2= 
\sum_{l=1}^ t \frac{2l+1}{|\ell(\gamma)|^2}\int_0^1 \int_0^1 P_l(\langle\gamma(r),\gamma(s)\rangle)\|\dot{\gamma}(r)\| \|\dot{\gamma}(s)\|\mathrm{d}r\mathrm{d}s\,.
\end{equation*}

The addition formula yields
\begin{equation*}
(2l+1) \int_0^1 \int_0^1
P_l(\langle\gamma(r),\gamma(s)\rangle)\|\dot{\gamma}(r)\|
\|\dot{\gamma}(s)\|\mathrm{d}r\mathrm{d}s =
\left(\int_0^1\sum_{m=-l}^{l}Y_{l,m}(\gamma(r))\|\dot{\gamma}(r)\|
  \mathrm{d}r \right)^2\geq 0\, .
\end{equation*}
Consequently $\|L\|_t = 0$, if and only if the left-hand side vanishes
for $l=1, \dots , t$. 
\end{proof}
Our  recipe for the construction  of geodesic $t$-design cycles
consists of two steps, first a  numerical minimization and then a  beautification. 
As the first step, the \emph{numerical} process works as follows: 
\begin{itemize}
\item[(i)] The goal of the numerical procedure is to minimize the error
  functional ~\eqref{eq:L norm to be minimized}.
  A natural initialization is a geodesic cycle on $\S^2$
  that connects $t$-design points. Indeed,  
  the vertices of the Platonic solids form $t$-design points for $t=2,3,5$, respectively, and we initialize the minimization algorithm with their spherical Hamiltonian cycles as depicted in Figure \ref{fig:Hamil}.
\begin{figure}
\hspace{-0.6cm}
\subfigure[tetrahedron]{
\includegraphics[width=.19\textwidth]{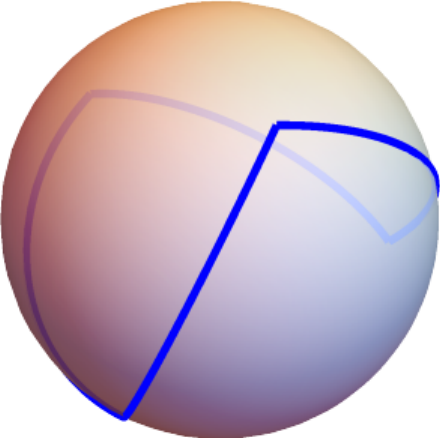}\label{fig:simplex init}}
\subfigure[octahedron]{
\includegraphics[width=.19\textwidth]{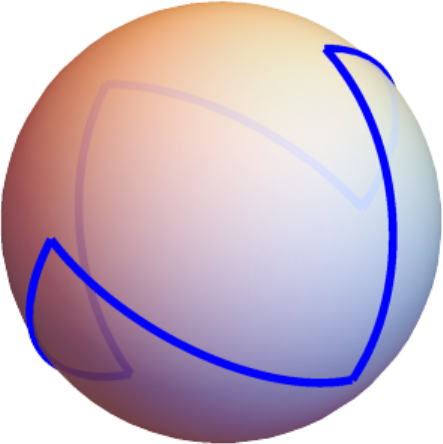}\label{fig:octahedron init}}
\subfigure[cube]{
\includegraphics[width=.19\textwidth]{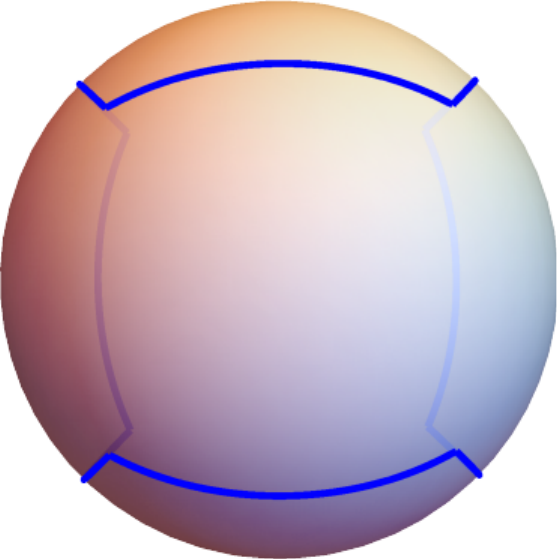}\label{fig:cube init}}
\subfigure[icosahedron]{
\includegraphics[width=.19\textwidth]{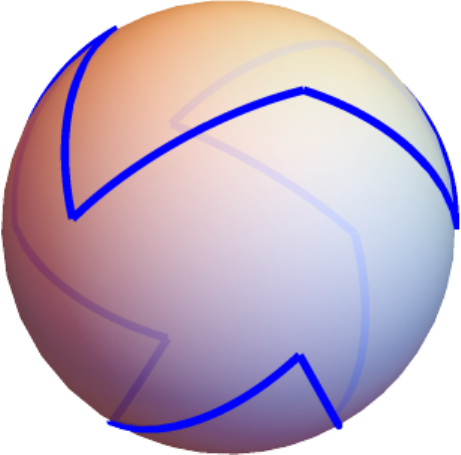}}
\subfigure[dodecahedron]{
\includegraphics[width=.19\textwidth]{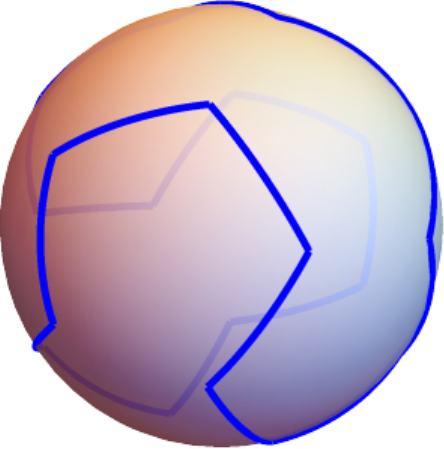}}
\caption{The platonic solids are the five regular polytopes in
  $\R^3$. The vertices of the tetrahedron form a $2$-design, the vertices
  of the octahedron and the cube yield a $3$-design, and the
  icosahedron and the dodecahedron form a $5$-design. They are
  regular convex polytopes in $\R^3$ and admit Hamiltonian cycles,
  i.e., cycles of edges that pass each vertex exactly once. Their
  projections onto the sphere lead to the geodesic cycles depicted
  here. We emphasize that the Hamiltonian cycles (a) -- (e)  do not  form geodesic $1$-design cycles. 
}\label{fig:Hamil}
\end{figure}

For geodesic cycles, the error functional \eqref{eq:L norm to be
  minimized} is a function of the control points, and is thus a
function of finitely many parameters (twice the number of control
points).   
\item[(ii)] We use a suitable  iterative optimization algorithm,  such
  as gradient descent, steepest descent, or some conjugate gradient
  methods,  to minimize the error functional. Geometrically, each iteration of the descent algorithm yields a new set of parameters, or in other words, a new set of control points with a corresponding geodesic cycle. 
\item[(iii)] The iteration stops when we have found a set of control
  points for which the error functional \eqref{eq:L norm to be
    minimized} vanishes up to machine precision. By Lemma
  \ref{lemma:num opt} this numerical solution yields a promising
  candidate of a geodesic $t$-design cycle.  
\end{itemize}

As the  second step, the \emph{beautification} process refines the numerical candidate into an exact geodesic $t$-design cycle. 
\begin{itemize}
\item[(iv)] We identify a reduced set of parameters -- much fewer than
  twice the number of control points -- that still ensures a
  continuous transition from the  initialization to the solution. This selection is based on a visual comparison of the control points between the initial configuration and the numerical solution.
\item[(v)] The $t$-design property is formulated in the reduced parametrization as a system of nonlinear equations.
\item[(vi)] We prove that this system of equations is solvable and
  yields a $t$-design curve. 
\end{itemize}
 In the subsequent sections we will report on our progress in regard to the numerical process and the
beautification process.

\subsection{Numerical candidates of geodesic $t$-design cycles}
(i) Figure~\ref{fig:candidates} in the introduction shows the results of the numerical
optimization, when the procedure was initialized with a geodesic cycle
based on one of the Platonic solids (a) -- (c) in
Figure~\ref{fig:Hamil}. 

(ii) If we start with the initial  cycle based on the icosahedron (d) for
$t=5$, the error functional~\eqref{eq:L norm to be
    minimized} does not vanish.  
This initialization cycle consists of  twelve  arcs,
and our numerical computations suggest that there does not exist any geodesic
$5$-design cycle with only twelve arcs. 

(iii) We proceed with the twenty vertices of the dodecahedron (e). The
initial  cycle has twenty parts and we obtain a numerical candidate of a geodesic $5$-design cycle shown in  Figure \ref{fig:3 design candidate dodecahedron}. 
\begin{figure}[h]
\subfigure[$20$ arcs]{
\includegraphics[width=.33\textwidth]{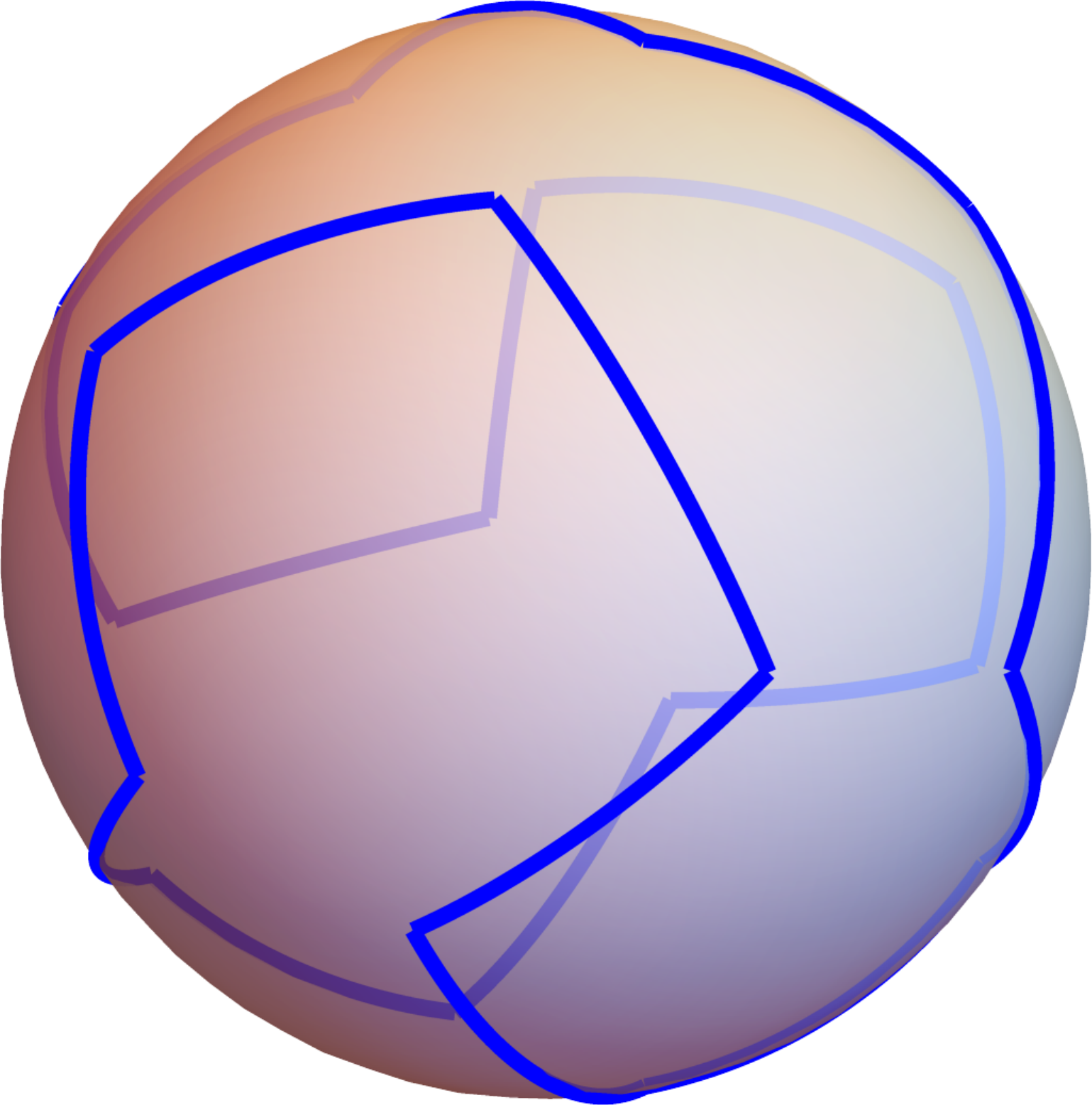}
\label{fig:3 design candidate dodecahedron}}\hspace{1cm}
\subfigure[$18$ arcs]{
\includegraphics[width=.33\textwidth]{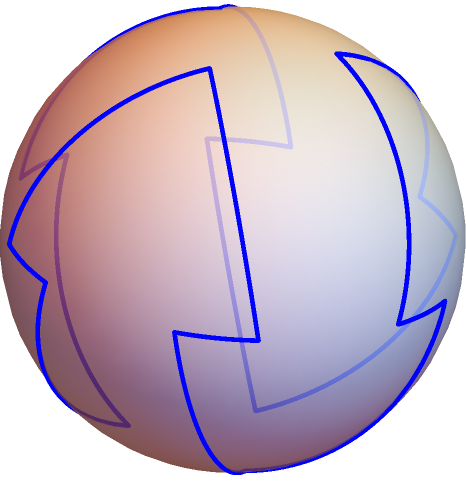}
\label{fig:3 design candidate 18 point design}}
\caption{Numerical candidates of geodesic $5$-design cycles.}\label{fig:cand}
\end{figure}

(iv) To explore if there is also a numerical candidate of a geodesic
$5$-design cycle with fewer than twenty arcs, we need to initialize
the numerical scheme with  fewer control points. 
According to \cite{HardinSloane96,Reznick:1995}, there exist $5$-design points in $\S^2$ if the number of points equals $12$, $16$, $18$, $20$, or any integer $\geq 22$. We have not been successfull with sixteen arcs, but when initializing by suitably connecting $18$ points of a spherical $5$-design, we derive the candidate of a geodesic $5$-design cycle with $18$ parts in Figure \ref{fig:3 design candidate 18 point design}.

\subsection{Beautification of some candidates}\label{sec:beautify}
We now start the beautification process. For $t$-design points, the beautifying
procedure usually starts with the computation of all pairwise inner
products. The goal is  to eventually identify (or ''guess'') a finite group of rotations and reflections, so that the points are (unions of) orbits of this symmetry group \cite{Sloane:2003zp}. 

In the  setting of curves, the inner products between the control
points do not reveal sufficient information to make an educated guess
about any group orbits. Instead, as outlined in the beautification
process in Section \ref{sec:two}, we efficiently parameterize the
control points, rewrite the $t$-design conditions in terms of these
parameters, and eventually argue that the parameter equations are
solvable.

\subsubsection{Beautifying the candidates resulting from the tetrahedron and the octahedron}

We show how the beautification is done for the candidates in Figures \ref{fig:2chain}
and \ref{fig:3chain}.

\subsubsection*{The tetrahedron}

Connecting the four vertices of the tetrahedron yield a geodesic cycle for
the initialization of the numerical optimization process, see Figure
\ref{fig:simplex init}. The four control points require eight
parameters in general. By comparing it with the numerical solution of
the minimization, we identified  a one-parameter family of control
points that facilitate a  continuous transition from the vertices of
the tetrahedron to the control points of the numerical
minimizer. Specifically,  for $a\in(0,\frac{\pi}{2}]$, we introduce
the control points 
\begin{align}\label{eq:points t=2 simple}
{\small
x_1=\begin{pmatrix}
\sin(a)\\0\\\cos(a)
\end{pmatrix},
\quad 
x_2=\begin{pmatrix}
0\\ \sin(a)\\ -\cos(a)
\end{pmatrix},
\quad 
x_3=\begin{pmatrix}
-\sin(a)\\0\\ \cos(a)
\end{pmatrix},
\quad 
x_4=\begin{pmatrix}
0\\ -\sin(a) \\ -\cos(a)
\end{pmatrix}}\,.
\end{align}
The induced geodesic cycle is denoted by $\Gamma^{(2,a)}$. The points
$x_1$ and $x_3$ are at distance $d(x_1,e_3) = \arccos \langle x_1, e_3
\rangle = a = d(x_3,e_3)$ from the north pole $e_3= (0,0,1)^\top$, and
a rotation  of their antipodals by $\pi /2$ degrees yields $x_2$ and $x_4$, see Figures \ref{fig:2a0}, \ref{fig:2circb}, and \ref{fig:2circ}.

The choice $a=\arctan(\sqrt{2})\approx 0.9553$ yields the vertices of
the regular  tetrahedron, whereas  $a\approx 1/2$ leads to a geodesic cycle that resembles the candidate in Figure \ref{fig:2chain}. 

The extreme cases are  $a=\frac{\pi}{2}$, for which  we obtain a great
circle,  and $a\rightarrow 0$, which  approximates two great circles
perpendicular to each other, cf.~Figure \ref{fig:2a0} and
\ref{fig:2circ}.  

\subsubsection*{The octahedron}

Connecting the six vertices of the octahedron yield a geodesic cycle
with $6$ control points  shown in Figure \ref{fig:octahedron
  init}. Again, we reduce the twelve parameters corresponding to the
six control points to a one-parameter family of six points.
Specifically, we parametrize the control points  by 
\begin{equation}\label{eq:y}
\begin{aligned}
y_1&=\begin{pmatrix}
\sin(a)\\0\\\cos(a)
\end{pmatrix},
&
y_2 &= \begin{pmatrix}
\frac{1}{2}\sin(a)\\
\frac{\sqrt{3}}{2}\sin(a)\\
-\cos(a)
\end{pmatrix},
&
y_3&=\begin{pmatrix}
-\frac{1}{2}\sin(a)\\
\frac{\sqrt{3}}{2}\sin(a)\\
\cos(a)
\end{pmatrix},\\
y_4 &= -y_1,
& y_5 &= -y_2,
& y_6 &= -y_3\, , 
\end{aligned}
\end{equation}
for $a\in (0,\tfrac{\pi}{2}]$, and call the corresponding geodesic
cycle 
 $\Gamma^{(3,a)}$.  For small $a$, the points $y_1,y_3,y_5$ are grouped around the north pole at distance $a$, and $y_4,y_6,y_2$ are their antipodals that group around the south pole, see Figures \ref{fig:3a0}, \ref{fig:opt 3}, and \ref{fig:3circ}. 

As above, $a=\arctan(\sqrt{2})$ stands out since in this case the
points  are the vertices of the (Platonic) octahedron. For $a\approx
1/2$, the geodesic cycle resembles the candidate geodesic $3$-design
cycle in Figure \ref{fig:3chain} obtained by numerical optimization. Again, the limiting case  $a =
\frac{\pi}{2}$ yields a great circle, cf.~Figure \ref{fig:3a0}, and
for  $a\rightarrow 0$, $\Gamma^{(3,a)}$ approximates three equally
spaced great circles that run through the north and the south pole,
Figure \ref{fig:3circ}.

We have now  completed item (iv) for the candidate in Figure \ref{fig:3chain}.

\vspace{2mm}

The following theorem finishes the beautification process for the
candidates in Figures \ref{fig:2chain} and \ref{fig:3chain} and proves
the existence of 
geodesic $2$- and $3$-design cycles.  
\begin{tm}\label{thm:2dt simple}
For $t=2,3$, there are parameters  $a_t\in(0,\frac{\pi}{2})$ such that $\Gamma^{(t,a_t)}$ is a geodesic $t$-design cycle.
\end{tm}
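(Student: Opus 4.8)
The plan is to use the characterization from Lemma~\ref{lemma:num opt}: the geodesic cycle $\Gamma^{(t,a)}$ (with control points given by \eqref{eq:points t=2 simple} for $t=2$, and by \eqref{eq:y} for $t=3$) is a $t$-design cycle if and only if
\begin{equation*}
F_l(a) := \int_0^1\!\!\int_0^1 P_l\big(\langle \Gamma^{(t,a)}(r),\Gamma^{(t,a)}(s)\rangle\big)\,\|\dot\Gamma^{(t,a)}(r)\|\,\|\dot\Gamma^{(t,a)}(s)\|\,\mathrm{d}r\,\mathrm{d}s = 0
\end{equation*}
for $l=1,\ldots,t$. The first step is to exploit the built-in symmetry of the parametrizations to kill most of these equations automatically. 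For $t=2$, the four control points in \eqref{eq:points t=2 simple} are invariant (as a set, with the cyclic arc-order respected up to relabeling) under the rotation $R$ by $\pi/2$ about the $e_3$-axis composed with reflection, and more importantly the cycle has an antipodal-type symmetry that forces $\int_{\Gamma^{(2,a)}} Y_{1,m}=0$ for all $m$; hence $F_1(a)\equiv 0$ for every $a$, and only $F_2(a)=0$ remains. Similarly for $t=3$, the six points in \eqref{eq:y} satisfy $y_{j+3}=-y_j$, so the cycle is invariant under the antipodal map $x\mapsto -x$; since $P_l$ is even for $l$ even and odd for $l$ odd, together with the threefold rotational symmetry about $e_3$ this forces $F_1(a)\equiv 0$ and $F_3(a)\equiv 0$ identically in $a$ (odd-degree spherical harmonics integrate to zero against an antipodally symmetric measure). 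Thus the whole $t$-design condition collapses to the single scalar equation $F_t(a)=0$ on $a\in(0,\tfrac\pi2)$.

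The second step is to compute $F_t$ explicitly as a function of $a$ — this is where Mathematica does the heavy lifting. Parametrizing each of the (finitely many) geodesic arcs by the constant-speed formula preceding \eqref{eq:dist def}, the inner product $\langle\Gamma(r),\Gamma(s)\rangle$ on each pair of arcs is an elementary trigonometric expression in $r,s,a$, and $\|\dot\Gamma\|$ is piecewise the constant arc-length $\dist(x_j,x_{j+1})$, itself an explicit function of $a$. Plugging these into $P_2$ (resp.\ $P_3$, which appears only in $F_3\equiv 0$, so in fact only $P_2$ for $t=2$ and only $P_2$ again — no: for $t=3$ we need $F_2$) and integrating term by term over $[0,1]^2$ yields a closed-form expression for $F_t(a)$ that is rational in $\sin a,\cos a$ (possibly with $\arccos$-type arc-length factors $\dist(x_j,x_{j+1})$ appearing algebraically). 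I would record this as ``a computation leads to'' $F_t(a)=g_t(a)$ for an explicit $g_t$.

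The third and final step is an intermediate-value argument. The excerpt already tells us the geometric endpoints: at $a=\tfrac\pi2$ the cycle degenerates to a great circle, which is a $1$-design but \emph{not} a $2$- or $3$-design (it is noted that the Hamiltonian cycles of the Platonic solids are not even $1$-design cycles, and a single great circle traversed once has $F_2\neq 0$); and as $a\to 0^+$ the cycle approaches a configuration of two (resp.\ three) mutually perpendicular great circles through the poles, which \emph{is} a $2$- (resp.\ $3$-) design. Concretely I expect $g_t(a)$ to be continuous on $(0,\tfrac\pi2]$ with $\lim_{a\to 0^+} g_t(a)$ and $g_t(\tfrac\pi2)$ of opposite sign (or one of them zero only in the limit, not attained on the open interval), so by the intermediate value theorem there is $a_t\in(0,\tfrac\pi2)$ with $g_t(a_t)=0$. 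One must also check $\Gamma^{(t,a_t)}$ is genuinely a curve, i.e.\ no two consecutive control points are antipodal — but $x_j$ and $x_{j+1}$ in \eqref{eq:points t=2 simple} have inner product $-\cos^2 a\neq \pm1$ for $a\in(0,\tfrac\pi2)$, and similarly in \eqref{eq:y}, so the arcs are well defined and $\ell(\Gamma^{(t,a)})>0$.

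The main obstacle is the middle step: verifying that the explicit $g_t$ actually changes sign, i.e.\ pinning down $\operatorname{sign} g_t(\tfrac\pi2)$ and $\operatorname{sign}\lim_{a\to0^+}g_t(a)$ rigorously rather than just numerically. The great-circle endpoint is easy (a parametrized great circle traversed once gives $\int P_2$ of $\langle\gamma(r),\gamma(s)\rangle$ which one computes directly), but the $a\to 0$ limit requires care because the arc-lengths $\dist(x_j,x_{j+1})\to\tfrac\pi2$ while the configuration degenerates, so one should argue continuity of $F_t$ up to $a=0$ (or work on a closed subinterval $[\varepsilon,\tfrac\pi2]$ and evaluate $g_t(\varepsilon)$ numerically-but-provably via an explicit sign estimate). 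Provided the closed form $g_t$ is in hand, this is routine; without it, nothing moves.
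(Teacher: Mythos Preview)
Your symmetry reduction is essentially right and matches the paper: for both families the integrals of $x,y,z,xy,xz,yz$ along $\Gamma^{(t,a)}$ vanish for every $a$, and $\int_{\Gamma^{(t,a)}}x^2=\int_{\Gamma^{(t,a)}}y^2$; for $t=3$ the antipodal symmetry kills all odd monomials. So in each case only a single scalar condition survives.

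However, the intermediate-value step as you set it up cannot work. By its very definition in Lemma~\ref{lemma:num opt},
\[
F_l(a)=\sum_{m=-l}^{l}\Big|\tfrac{1}{\ell(\Gamma^{(t,a)})}\int_{\Gamma^{(t,a)}}Y_{l,m}\Big|^2\ \ge\ 0
\]
for every $a$, so $F_2$ (your $g_t$) can never change sign, and an IVT argument on $F_2$ locates nothing in the open interval. The paper avoids this by passing from the quadratic $F_2$ to the underlying \emph{signed linear} quantity $a\mapsto \int_{\Gamma^{(t,a)}}(x^2-z^2)$ (equivalently $\int_{\Gamma^{(t,a)}}Y_{2,0}$), which \emph{does} change sign, and then uses $x^2+y^2+z^2=1$ together with $\int x^2=\int y^2$ to conclude that all three equal $\tfrac13$.

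There is also a factual error in your endpoint analysis: the $a\to 0$ limits are \emph{not} $t$-designs. For $t=2$ the limit is two great circles through the poles (in the $xz$- and $yz$-planes), for which $\tfrac{1}{\ell}\int x^2=\tfrac14$ but $\tfrac{1}{\ell}\int z^2=\tfrac12$. For $t=3$ the limit is three \emph{equally spaced} great circles through the poles (not ``mutually perpendicular''---three planes containing the $z$-axis cannot be pairwise orthogonal), and the same computation gives $\tfrac14$ and $\tfrac12$. So the limiting value of $\int(x^2-z^2)$ is strictly negative, while at $a=\tfrac\pi2$ the single great circle in the $xy$-plane gives $\tfrac12-0>0$. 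This sign change is exactly what drives the paper's IVT argument; your claim that the limit is a design would have given $0$ at the endpoint and no interior root.

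In short: keep your symmetry reduction, but replace $F_2$ by the signed quantity $\int_{\Gamma^{(t,a)}}(x^2-z^2)$ before invoking the intermediate value theorem, and correct the description of the $a\to 0$ limit. With those fixes your argument becomes the paper's proof.
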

\begin{figure}
\subfigure[$\Gamma^{(2,a)}$ for $a=\frac{\pi}{2}-\frac{1}{10}$]{
\includegraphics[width=.3\textwidth]{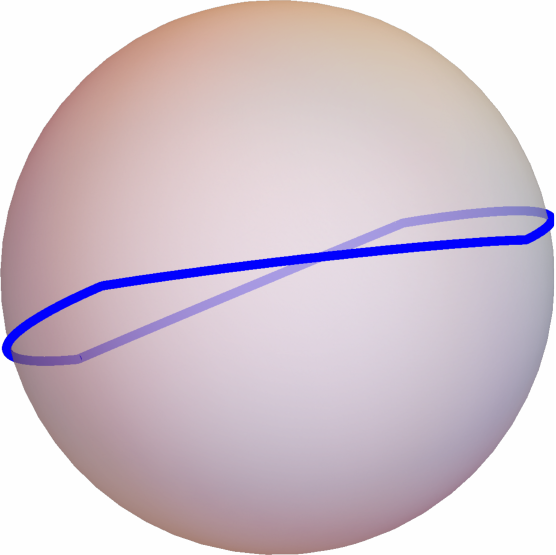}\label{fig:2a0}}\hfill
\subfigure[$\Gamma^{(2,a_2)}$ for $a_2=0.47367\ldots $]{
\includegraphics[width=.3\textwidth]{images/polygonzug_2b.pdf}\label{fig:2circb}}\hfill
\subfigure[$\Gamma^{(2,a)}$ for $a=\frac{1}{30}$]{
\includegraphics[width=.3\textwidth]{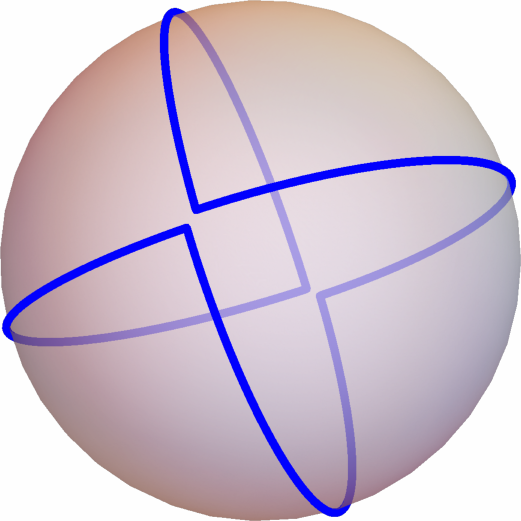}\label{fig:2circ}}

\subfigure[$\Gamma^{(3,a)}$ for $a=\frac{\pi}{2}-\frac{1}{10}$]{
\includegraphics[width=.3\textwidth]{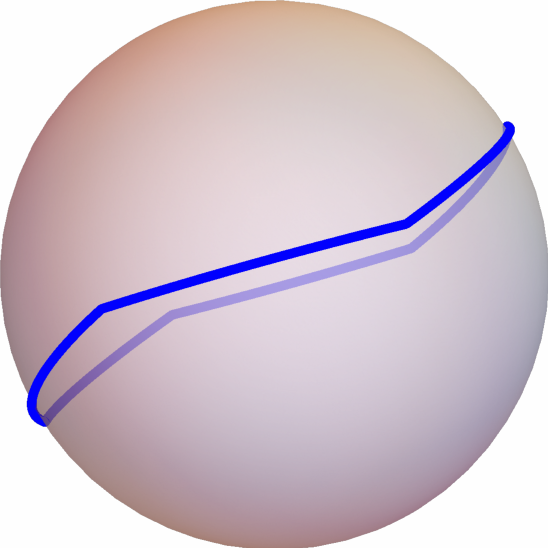}\label{fig:3a0}}\hfill
\subfigure[$\Gamma^{(3,a_3)}$ for $a_3\approx0.449858$]{
\includegraphics[width=.3\textwidth]{images/polygonzug_3b.pdf}\label{fig:opt 3}}\hfill
\subfigure[$\Gamma^{(3,a)}$ for $a=\frac{1}{30}$]{
\includegraphics[width=.3\textwidth]{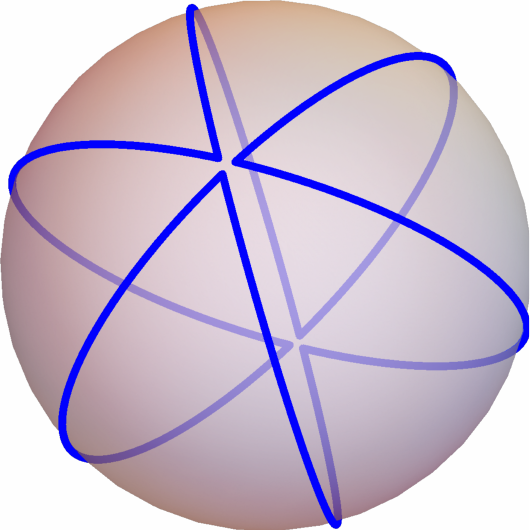}\label{fig:3circ}}
\caption{The geodesic chains $\Gamma^{(2,a)}$ and $\Gamma^{(3,a)}$ in Theorem \ref{thm:2dt simple}. 
}\label{fig:2td}
\end{figure}

\begin{proof}
It suffices to verify the exactness $\frac{1}{\ell(\gamma)}\int_\gamma
f = \int_{\S^2} f$ for the monomials of degree $\leq t$. For the
constant polynomial $f\equiv 1$, this is always satisfied by our normalization. For non-constant monomials, the  symmetries 
of the sphere lead to
\begin{align}
0 & = \int_{\mathbb{S}^2} x^iy^jz^k\,,\quad \text{if  at least one of the $i,j,k$ being odd}\,,\label{eq:degree 1}\\
\frac{1}{3} &= \int_{\S^2}x^2 = \int_{\S^2}y^2 = \int_{\S^2}z^2\,.\label{eq:1/3 xy}
\end{align}

Both families $\Gamma^{(2,a)}$ and $\Gamma^{(3,a)}$ possess some symmetries by \eqref{eq:points t=2 simple} and \eqref{eq:y}, 
 and direct computations reveal that 
 \begin{equation}\label{eq:G2a}
0 = \int_{\Gamma^{(t,a)}} x = \int_{\Gamma^{(t,a)}} y=\int_{\Gamma^{(t,a)}} z= \int_{\Gamma^{(t,a)}} xy = \int_{\Gamma^{(t,a)}} xz =\int_{\Gamma^{(t,a)}} yz\,.
\end{equation}
Likewise, when $t=3$, the curve $\Gamma^{(3,a)}$ is antipodal and therefor the integrals of every monomial of odd degree vanishes. 
Hence, \eqref{eq:degree 1} is matched for these monomials. 
In addition, the identity 
\begin{equation}\label{eq:id x2=y2}
\int_{\Gamma^{(t,a)}} x^2 = \int_{\Gamma^{(t,a)}} y^2
\end{equation}
holds by symmetry of $\Gamma^{(t,a)} $ again. Note that all these
identities hold for arbitrary values of $a\in (0,\pi /2)$. 

We are left with the integral of $x^2, y^2, z^2$ along
$\Gamma^{(t,a)}$. In this case we evaluate the path integral $a\mapsto
\int _{\Gamma^{t,a }}(x^2-z^2)$ for the limiting cases $a=\tfrac{\pi}{2}$ and
$a\to 0$ and will detect a sign change.  For $a=\tfrac{\pi}{2}$, the trace of $\Gamma^{(t,\frac{\pi}{2})}$ is a great circle with
\begin{equation*}
\frac{1}{\ell(\Gamma^{(t,\frac{\pi}{2})})}\int_{\Gamma^{(t,\frac{\pi}{2})}} x^2 = \frac{1}{2}\,,\qquad  \qquad
\frac{1}{\ell(\Gamma^{(t,\frac{\pi}{2})})}\int_{\Gamma^{(t,\frac{\pi}{2})}} z^2 =  0\,,
\end{equation*}
so that we derive $\int _{\Gamma^{t,\frac{\pi}{2} }}(x^2-z^2)>0$.  To verify $\lim_{a\rightarrow 0}\int _{\Gamma^{t,a}}(x^2-z^2)<0$, we restrict ourselves to $t=2$. The case $t=3$ is proved analogously. 

For $a\rightarrow 0$, the trace of $\Gamma^{(2,a)}$ converges towards two great circles through the north and south pole that we now denote by $\Gamma^{(2,0)}$, see Figure \ref{fig:2circ}. One may compute directly or with the help of Mathematica
\begin{equation*}
 \frac{1}{\ell(\Gamma^{(2,0)})}\int_{\Gamma^{(2,0)}} x^2  = \frac{1}{4}\,,\qquad\quad 
\frac{1}{\ell(\Gamma^{(2,0)})}\int_{\Gamma^{(2,0)}} z^2 = \frac{1}{2}\,,
\end{equation*}
so that $\int _{\Gamma^{2,0 }}(x^2-z^2)<0$. 
Since $a \mapsto \frac{1}{\ell(\Gamma^{(2,a)})}\int_{\Gamma^{(2,a)}}
(x^2 -z^2) $ depends continuously on $a$ and changes sign from $a=\frac{\pi}{2}$ to $a=0$, the intermediate value theorem implies that there exists $a_2\in (0,\frac{\pi}{2}]$ with 
\begin{equation}\label{eq:x2z2}
\int_{\Gamma^{(2,a_2)}} (x^2 -z^2) = 0\,.
\end{equation}
Next, the identity $x^2+y^2+z^2=1$ on the sphere $\S ^2$  implies  
    \begin{equation*}
\frac{1}{\ell(\Gamma^{(2,a_2)})}\left(\int_{\Gamma^{(2,a_2)}} x^2 +\int_{\Gamma^{(2,a_2)}} y^2+ \int_{\Gamma^{(2,a_2)}} z^2 \right) = \frac{1}{\ell(\Gamma^{(2,a_2)})}\int_{\Gamma^{(2,a_2)}} 1  = 1 \,.
\end{equation*}
According to  \eqref{eq:id x2=y2} and \eqref{eq:x2z2}, the three integrals on the left-hand-side are equal and, hence, must evaluate to $\frac{1}{3}$. This matches the integration condition \eqref{eq:1/3 xy} and $\Gamma^{(2,a_2)}$ therefore is a geodesic $2$-design chain. 
\end{proof}

\begin{rem}
With the help of  Mathematica, we can  provide the actual nonlinear equation for the parameter $a$ that is mentioned in item (v) of the beautification process.

Using the parameter $\alpha=\sin a$,  the identity  
$\int_{\Gamma^{(2,a)}} (x^2 -z^2) =0$ from
\eqref{eq:x2z2}  can be
expressed  by the equivalent equation 
\begin{equation}\label{eq:a2 parameter}
h_2(\alpha):=\left(2 \alpha^2-1\right) \arccos(\alpha^2-1)-3 \alpha \sqrt{2-\alpha^2} \left(\alpha^2-1\right) = 0\,.
\end{equation}
In the interval $(0,1]$, the function $h_2$ has exactly one root at $\alpha_2=0.456157\ldots$, which leads to $a_2=\arcsin \alpha_2=0.47367\ldots$.

For the case $t=3$, similar computations lead to
\begin{equation}\label{eq:a3 parameter}
h_3(\alpha):=\left(3 \alpha^2-2\right) \arccos(\tfrac{3 \alpha^2}{2}-1)-3 \alpha \sqrt{12-9 \alpha^2} \left(\alpha^2-1\right) = 0\,.
\end{equation}
The only root of $h_3$ within $(0,1]$ is at $\alpha_3=0.434837\ldots$, so that $a_3=\arcsin\alpha_3=0.449858\ldots$. See also Figure \ref{fig:a2+a3}.

Note that the exact algebraic manipulation yields the additional
information that there is a unique parameter $a_t$, such that $\gamma
^{(t,a)}$ is a $t$-design.

\begin{figure}
\includegraphics[width=.45\textwidth]{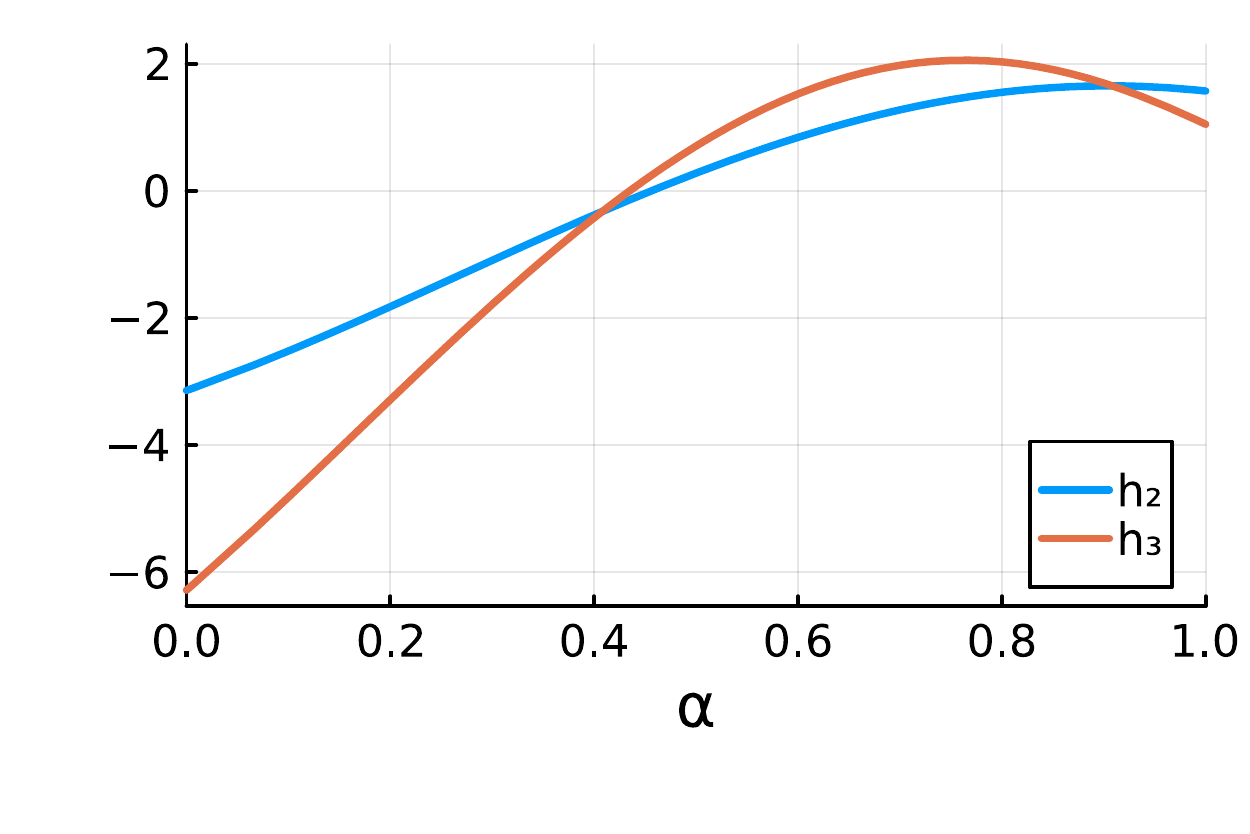}
\caption{Each of the functions $h_2$ and $h_3$ in \eqref{eq:a2 parameter} and \eqref{eq:a3 parameter} has exactly one root in the interval $(0,1]$.}\label{fig:a2+a3}
\end{figure}
\end{rem}

As for Example~\ref{ex:S2}, we observe that  both geodesic cycles $\Gamma^{(2,a)}$ and $
\Gamma^{(3,a)}$   partition the sphere into two regions  of equal
area. This can be seen by using the symmetry properties of these
cycles or, alternatively, 
with the Gauss-Bonnet formula \eqref{eq:GB}. 
Specifically, for geodesic cycles, $\int_\gamma k_g$ is the sum of the
turning angles at the control points. We --- or rather Mathematica ---
have computed the four turning angles of $\Gamma^{(2,a)}$ as 
$$
\pm 1 \pm(\pi+\tfrac{4}{3+\cos(2a)})\,,
$$
where all sign combinations are allowed. They obviously add up  to $0$. 

For $\Gamma^{(3,a)}$, the six turning angles are 
$$
\pm (1 -(\pi+\tfrac{4}{5+3\cos(2a)}))\, ,
$$
where each sign occurs three times, and they also add up to
$0$. Hence, both curves  $\Gamma^{(2,a)}$ and $ \Gamma^{(3,a)}$
partition the sphere into two regions  of equal area.

\subsubsection{Beautifying the candidate resulting from the cube}
To beautify the curve in Figure \ref{fig:3 design candidate cube}
obtained by numerical optimization with the cube as the initial set of
control points, we introduce the following two-parameter family of
eight control points. For $0<\alpha<\beta$ with $\alpha^2+\beta^2<1$
and $q=\sqrt{1-\alpha^2-\beta^2}$, we consider the points 
{\small
\begin{equation}\label{eq:xx}
\begin{aligned}
x_{1}& =
\begin{pmatrix}
\alpha\\ \beta\\q
\end{pmatrix},
&x_{2} &=
\begin{pmatrix}
 \beta\\\alpha\\-q
\end{pmatrix},
&x_{3} &=
\begin{pmatrix}
 -\beta\\\alpha\\-q
\end{pmatrix},
&
x_{4} &=
\begin{pmatrix}
-\alpha\\ \beta\\q
\end{pmatrix},\\
x_{5}& =
\begin{pmatrix}
-\alpha\\ - \beta\\q
\end{pmatrix},
&x_{6} &=
\begin{pmatrix}
- \beta\\-\alpha\\-q
\end{pmatrix},
&x_{7} &=
\begin{pmatrix}
 \beta\\-\alpha\\-q
\end{pmatrix},
&
x_{8} &=
\begin{pmatrix}
\alpha\\ -\beta\\q
\end{pmatrix}\, ,
\end{aligned}
\end{equation}}
and the corresponding  geodesic cycle $\gamma^{(\alpha,\beta)}$. 

For $\alpha=\beta=\frac{1}{\sqrt{3}}$, these points are the vertices of the cube and $\gamma^{(\frac{1}{\sqrt{3}},\frac{1}{\sqrt{3}})}$ is the spherical Hamiltonian cycle of the cube depicted in Figure \ref{fig:cube init}. The choice $\alpha=1/3$ and $\beta=3/4$ leads to a geodesic cycle that resembles the candidate in Figure \ref{fig:3 design candidate cube}. 

The following theorem completes the beautification process of the
curve in Figure \ref{fig:3 design candidate cube} and yields the
existence of 
a $3$-design cycle. 
\begin{tm}
There exist  $\alpha_0\in[\frac{1}{4},\frac{2}{5}]$ and $\beta_0\in[\frac{1}{2},\frac{9}{10}]$, such that $\gamma^{(\alpha_0,\beta_0)}$ is a geodesic $3$-design cycle.
\end{tm}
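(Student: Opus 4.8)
The plan is to mimic the proof of Theorem~\ref{thm:2dt simple}: use the symmetries of the family~\eqref{eq:xx} to reduce the $3$-design property of $\gamma^{(\alpha,\beta)}$ to a pair of scalar equations, make these explicit by evaluating path integrals over the eight geodesic arcs, and then solve the resulting $2\times 2$ system over the box $B=[\tfrac14,\tfrac25]\times[\tfrac12,\tfrac9{10}]$ by an intermediate value argument. For the first step, one records that the point set~\eqref{eq:xx} is invariant under the three isometries $R_1\colon(x,y,z)\mapsto(-x,y,z)$, $R_2\colon(x,y,z)\mapsto(x,-y,z)$, and $S\colon(x,y,z)\mapsto(y,x,-z)$ (the last being the rotation by $\pi$ about the axis through $(1,1,0)$), and that a direct inspection of the edge list $\{1,2\},\{2,3\},\dots,\{7,8\},\{8,1\}$ shows each of them maps the geodesic cycle $\gamma^{(\alpha,\beta)}$ onto itself; since they are isometries, $\int_{\gamma^{(\alpha,\beta)}}(f\circ\phi)=\int_{\gamma^{(\alpha,\beta)}}f$ for $\phi\in\{R_1,R_2,S\}$. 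Testing only monomials of degree $\le 3$ as in the proof of Theorem~\ref{thm:2dt simple}, the pair $R_1,R_2$ annihilates every monomial odd in $x$ or in $y$, while $S$ yields $\int_\gamma z=\int_\gamma z^3=0$, $\int_\gamma x^2=\int_\gamma y^2$, and $\int_\gamma x^2z=-\int_\gamma y^2z$. Together with $x^2+y^2+z^2=1$ on $\S^2$ and $\int_{\S^2}f=0$ for every nonconstant monomial of degree $\le 3$, the $3$-design conditions collapse to the two equations
\begin{equation}\label{eq:cube two conditions}
F_1(\alpha,\beta):=\int_{\gamma^{(\alpha,\beta)}}x^2z=0,\qquad F_2(\alpha,\beta):=\int_{\gamma^{(\alpha,\beta)}}(x^2-z^2)=0,
\end{equation}
since $F_2=0$ forces $\int_\gamma x^2=\int_\gamma y^2=\int_\gamma z^2=\ell(\gamma^{(\alpha,\beta)})/3$ and $F_1=0$ forces $\int_\gamma y^2z=0$.

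Next I would make~\eqref{eq:cube two conditions} explicit. By Section~\ref{sec:design}, each arc $\gamma_{x_j,x_{j+1}}$ has a closed-form constant-speed parametrization, and only two geodesic lengths occur, namely $\theta_1=\arccos\bigl((\alpha+\beta)^2-1\bigr)$ on the four arcs $\{1,2\},\{3,4\},\{5,6\},\{7,8\}$ and $\theta_2=\arccos(1-2\beta^2)$ on the remaining four, so that $\ell(\gamma^{(\alpha,\beta)})=4\theta_1+4\theta_2$. Substituting the coordinates of~\eqref{eq:xx} into these parametrizations and integrating the resulting elementary trigonometric integrands over $[0,1]$ (a Mathematica computation) yields closed forms for $F_1$ and $F_2$ in $\alpha$, $\beta$, $q=\sqrt{1-\alpha^2-\beta^2}$, $\theta_1$, $\theta_2$. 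Both are continuous on the parameter domain $\{0<\alpha<\beta,\ \alpha^2+\beta^2<1\}$, which contains the closed box $B$ because $\tfrac25<\tfrac12$ and $(\tfrac25)^2+(\tfrac9{10})^2=\tfrac{97}{100}<1$.

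For the last step, I would produce a common zero of $F_1,F_2$ in $B$ by a topological argument. The aim is to verify, from the explicit formulas, the boundary sign pattern required by the Poincar\'e--Miranda theorem: $F_1$ has opposite signs on the faces $\{\alpha=\tfrac14\}$ and $\{\alpha=\tfrac25\}$ uniformly over $\beta\in[\tfrac12,\tfrac9{10}]$, and $F_2$ has opposite signs on $\{\beta=\tfrac12\}$ and $\{\beta=\tfrac9{10}\}$ uniformly over $\alpha\in[\tfrac14,\tfrac25]$ (this pairing is natural since $\theta_2$, hence presumably $F_2(\alpha,\cdot)$, is monotone in $\beta$); Poincar\'e--Miranda then gives $(\alpha_0,\beta_0)\in B$ with $F_1(\alpha_0,\beta_0)=F_2(\alpha_0,\beta_0)=0$, i.e.\ a geodesic $3$-design cycle. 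Equivalently one can run two nested intermediate value arguments: for each fixed $\alpha$ there is a unique continuous $\beta=b(\alpha)\in(\tfrac12,\tfrac9{10})$ with $F_2(\alpha,b(\alpha))=0$ (using monotonicity of $F_2$ in $\beta$), and then $\alpha\mapsto F_1(\alpha,b(\alpha))$ changes sign between $\alpha=\tfrac14$ and $\alpha=\tfrac25$.

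The routine parts are the symmetry bookkeeping and the integral evaluation. The main obstacle is this final step: one needs \emph{rigorous}, not merely numerical, sign information for $F_1$ and $F_2$ on the four faces of $B$, and because the closed forms involve compositions of $\arccos$ with rational functions of $\alpha,\beta$ and square roots, this requires some careful estimation; the rational corners of $B$ are presumably chosen precisely so that these boundary estimates, together with the monotonicity needed for the nested-IVT version (or the corner hypotheses of Poincar\'e--Miranda), can be checked by hand or by a verified symbolic computation.
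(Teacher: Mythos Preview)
Your approach is essentially the paper's: reduce via the symmetries of \eqref{eq:xx} to two scalar equations and then apply Poincar\'e--Miranda on the same box with the same face-pairing ($F_1$ changing sign across the $\alpha$-faces, $F_2$ across the $\beta$-faces). Your symmetry bookkeeping is more explicit---the paper delegates the vanishing of the odd-degree monomials and the relation between $\int_\gamma x^2z$ and $\int_\gamma y^2z$ to Mathematica, whereas you derive them from the isometries $R_1,R_2,S$; your $S$-argument yields $\int_\gamma x^2z=-\int_\gamma y^2z$ while the paper records the relation with a plus sign, but either way the pair of conditions collapses to the single equation $F_1=0$, so the two-equation reduction and the subsequent topological step coincide.
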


\begin{proof}
For general $\alpha,\beta$, the geodesic cycle
$\gamma^{(\alpha,\beta)}$ inherits  several symmetries from the
symmetries of  its control
points in \eqref{eq:xx}. Mathematica or direct computations reveal that the integrals
along $\gamma^{(\alpha,\beta)}$ of the odd degree monomials $x$, $y$,
$z$, $xz^2$, $yz^2$, $x^2y$, $xy^2$,  $xyz$, $x^3,y^3,z^3$ 
vanish as required by \eqref{eq:degree 1}. Note that the line
integrals of these monomials along $\gamma^{(\alpha,\beta)}$ vanish
for arbitrary values of $\alpha, \beta $.

For $x^2z$ and $y^2z$, we  still have to satisfy the identities 
\begin{equation}\label{eq:gaga}
\int_{\gamma^{(\alpha,\beta)}}x^2z=\int_{\gamma^{(\alpha,\beta)}}y^2z
=0\,.
\end{equation}
Mathematica computations reveal that the first equality $\int_{\gamma^{(\alpha,\beta)}}x^2z=\int_{\gamma^{(\alpha,\beta)}}y^2z$ holds for all $\alpha,\beta$. To identify distinct parameters such that both integrals vanish, we evaluate the expression in terms of the parameters $\alpha$ and $\beta$ and obtain the first parameter identity
\begin{equation}\label{eq:first one cube}
 (\alpha-\beta) \left(1-\beta^2\right)^{3/2} \sqrt{ 2-(\alpha+\beta)^2} = \beta^3 - \beta^5 + \alpha^2 \beta (\beta^2-3)\,.
\end{equation}

To obtain a second equation for  $\alpha,\beta$, we observe that
$\int_{\gamma^{(\alpha,\beta)}} x^2=\int_{\gamma^{(\alpha,\beta)}}
y^2$ holds for all $\alpha , \beta $. The required  additional identity
$\int_{\gamma^{(\alpha,\beta)}} z^2 = \int_{\gamma^{(\alpha,\beta)}}
x^2$ then leads to the  equation
\begin{equation}\label{eq:parametric two}
\begin{split}
&2 (1 - \beta^2)  (1 - 2 (\alpha^2 - \alpha  \beta + \beta^2))  \arccos((\alpha+\beta)^2-1) \\
&+ (1 - 3 \alpha^2 - \beta^2)  (2 - (\alpha + \beta)^2) \arccos(1 - 2 \beta^2)\\
& \hspace{5cm}=\phantom{rwerewrsdsafsdfsdfsdfsdfs} \\
&  6 \left(1-\alpha^2-\beta^2\right) \!\left( (1-\beta^2)(\alpha+\beta)\sqrt{2-(\alpha+\beta)^2} - \beta \sqrt{1-\beta^2} (2-(\alpha+\beta)^2)\right).
\end{split}
\end{equation}

So far, we have accomplished item (v) of the beautification process by deriving the system of nonlinear equations \eqref{eq:first one cube} and \eqref{eq:parametric two}.
According to item (vi) of the beautification process we now need to
verify that this system of equations is solvable in the parameter range $\alpha^2+\beta^2\leq 1$. 

To use a bivariate version of the intermediate value theorem, we
restrict the parameters to a suitable, smaller  domain  and define the functions $u,v:[\frac{1}{4},\frac{2}{5}] \times[\frac{1}{2},\frac{9}{10}]\rightarrow \mathbb{R}$,
\begin{align}
u(\alpha,\beta) & := (\alpha-\beta) \left(1-\beta^2\right)^{3/2} \sqrt{ 2-(\alpha+\beta)^2} - \beta^3 + \beta^5 - \alpha^2 \beta (\beta^2-3)\,,\label{eq:u}\\
v(\alpha,\beta) & := \text{right-hand-side of \eqref{eq:parametric
                  two} \,--\,
                  left-hand-side of \eqref{eq:parametric
                  two}}\,.\label{eq:v}
\end{align}
The following claims are illustrated in Figure \ref{fig:uv}, but can
also be verified analytically. We observe that $\beta\mapsto
u(\frac{1}{4},\beta)$ is negative and  $\beta\mapsto
u(\frac{2}{5},\beta)$ is positive on $[\frac{1}{2},\frac{9}{10}]$. For
the function $v$, we see that $\alpha\mapsto v(\alpha,\frac{1}{2})$ is
negative and $\alpha\mapsto v(\alpha,\frac{9}{10})$ is positive on the
interval $[\frac{1}{4},\frac{2}{5}]$. As a consequence of Brouwer's
fixed-point theorem, sometimes referred to as the Poincar\'e-Miranda
Theorem \cite{Mawhin19}, there is a tuple $(\alpha_0,\beta_0)\in
[\frac{1}{4},\frac{2}{5}] \times[\frac{1}{2},\frac{9}{10}]$ such that
both functions vanish, $u(\alpha_0,\beta_0)=0$ and
$v(\alpha_0,\beta_0)=0$.

For these parameters both  identity~\eqref{eq:gaga} and the identities
$\int _{\gamma ^{(\alpha _0, \beta _0)}} x^2 =
\int _{\gamma ^{(\alpha _0, \beta _0)}} y^2 = \int _{\gamma ^{(\alpha
    _0, \beta _0)}} z^2 = \tfrac{1}{3}$ are satisfied, consequently 
$\gamma ^{(\alpha _0, \beta _0)}$ is a $3$-design cycle. 
\end{proof}

By  solving  the system of equations \eqref{eq:first one cube} and
\eqref{eq:parametric two} numerically  for $\alpha$ and $\beta$ with
a Newton method with arbitrary precision, we  obtain  
\begin{equation*}
\alpha_0 = 0.381612286088762544249895\ldots ,\qquad \beta_0 = 0.767717328937887399141688\ldots
\end{equation*}

\begin{figure}
\subfigure[$u$]{
\includegraphics[width=.45\textwidth]{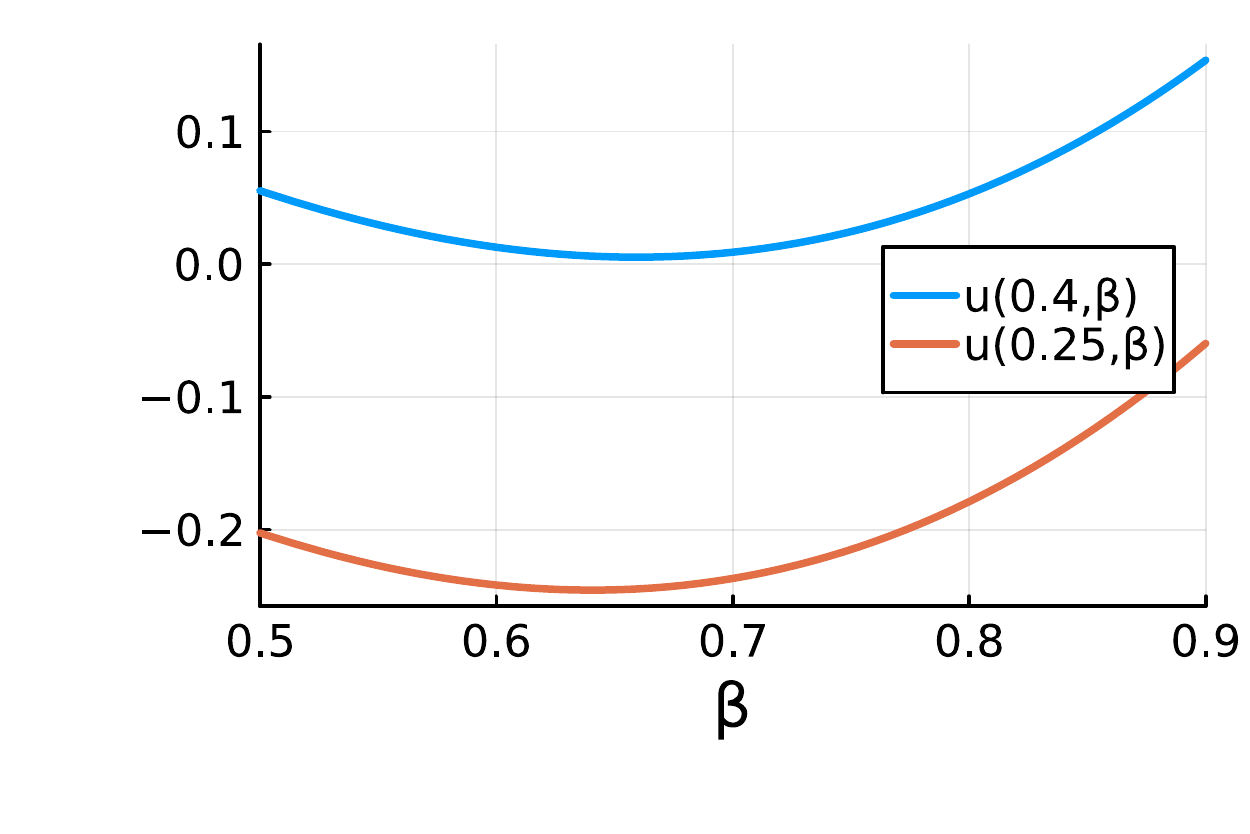}}\hspace{.5cm}
\subfigure[$v$]{
\includegraphics[width=.45\textwidth]{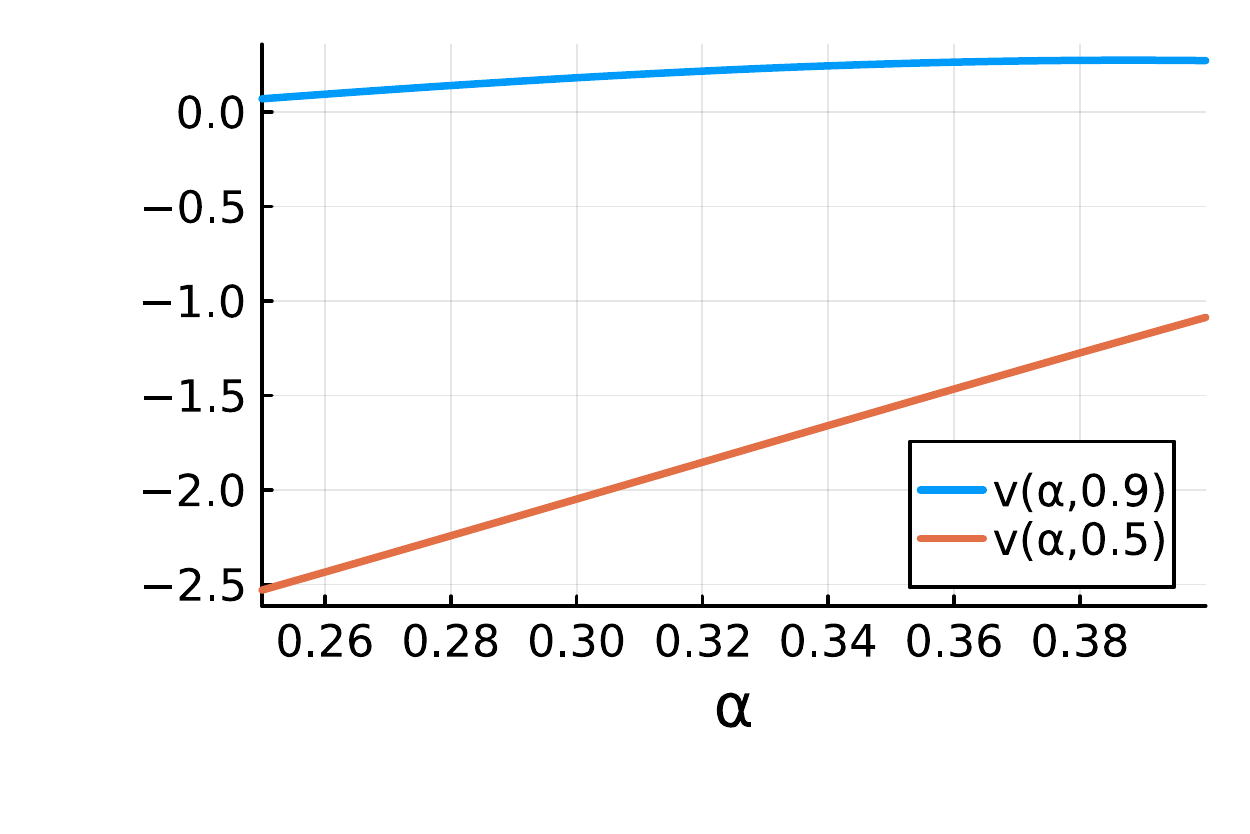}}
\caption{The marginals of the functions $u$ and $v$ in \eqref{eq:u} and \eqref{eq:v}.}\label{fig:uv}
\end{figure}

Again, we may employ symmetry arguments to deduce that the curve $\gamma^{(\alpha,\beta)}$ splits the sphere into two regions of equal area.

\subsubsection{The candidate resulting from the dodecahedron in Figure \ref{fig:cand}.}
The beautification of the numerical candidates of geodesic $5$-design
cycles shown in Figures \ref{fig:3 design candidate dodecahedron} and
\ref{fig:3 design candidate 18 point design} seems more difficult. So
far, we have not been able to complete their beautification process.

\section{\mz\ Inequalities}\label{sec:mz}
We have observed that a closed, simple  curve that connects   $t$-design points along geodesic
arcs is  not  a geodesic $t$-design cycle
in general.  
The idea in  the previous section was to alter and modify this cycle by numerical optimization combined with a beautification procedure that led to $t$-design curves for some $t$. 

In a different direction one may ask what do we obtain if we connect
$t$-design points? In this section we will show that the resulting
geodesic cycles always satisfy \mz\ inequalities. Such inequalities are less rigid, and the construction of curves satisfying \mz\ inequalities is expected to be easier than $t$-design curves. 

\begin{df} \label{defmz}
Fix $1\leq p<\infty$. A sequence of curves $(\gamma_t)_{t\in\N}$ on
the sphere $\S^d$ is called a \mz\ family for $L^p(\S^d)$,  if there
are constants $0<A\leq B<\infty$ such that, for all degrees $t\in\N$
and for   all  polynomials $f\in\Pi_t$  
\begin{equation} \label{mzdef}
A \|f\|_{L^p(\sd)} \leq \Big(\frac{1}{\ell(\gamma_t)} \int_\gamma
|f|^p \Big)^{1/p}\leq B\|f\|_{L^p(\sd)}\, .  
\end{equation}
\end{df}
For $p=\infty $, we use the supremum norm on $\S ^d$ and on $\gamma$, so
that $\|f\|_{L^\infty (\gamma )} = \sup _{s\in [0,1]} |f(\gamma
(s))|$.

The point of Definition~\ref{defmz} is that  the constants $A$
and $B$  are  uniform in $t$ (but they may depend on $p$).

The \mz\ inequalities state the norm equivalence of $\|f\|_{L^p(\S ^d)}$ and
the sampled norm along the curve $\gamma $. For fixed degree $t$ the
inequalities in \eqref{mzdef} hold for all $p$ simultaneously since
$\Pi _t$ is finite-dimensional. This is a new form of discretization of the \( p \)-norm, applied along curves instead of point sets.  
See~\cite{temlyakov} for a first  orientation on sampling and
discretization of norms. 

The connection to $t$-design curves for $p=2$ is explained in the following lemma that expresses the $t$-design property as a norm equality.
\begin{lemma}\label{lemma:design=mz}
A closed curve $\gamma$ is a $2t$-design curve if and only if 
\begin{equation} \label{tight}
  \frac{1}{\ell (\gamma)} \int _{\gamma}  |f|^2  =
  \|f\|_{L^2(\sd)}^2\,  \qquad \text{for all }
  f\in \Pi _t \,.
\end{equation}
\end{lemma}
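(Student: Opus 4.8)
The plan is to prove the equivalence by expressing the polynomial $|f|^2$ in terms of polynomials of degree at most $2t$ and reducing both conditions to path integrals of such polynomials. First I would observe that if $f \in \Pi_t$, then $f$ is a real polynomial of degree $\le t$ restricted to $\sd$, so $|f|^2 = f^2 \in \Pi_{2t}$ (working with real-valued polynomials as in the rest of the paper; if complex coefficients are allowed, write $f = g + \ii h$ with $g,h \in \Pi_t$ real and note $|f|^2 = g^2 + h^2 \in \Pi_{2t}$). Hence the right-hand side of \eqref{tight} equals $\int_{\sd} |f|^2$, and \eqref{tight} is precisely the statement that $\frac{1}{\ell(\gamma)}\int_\gamma F = \int_{\sd} F$ holds for every $F$ of the special form $F = |f|^2$ with $f \in \Pi_t$.

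For the forward direction, if $\gamma$ is a $2t$-design curve, then by definition $\frac{1}{\ell(\gamma)}\int_\gamma F = \int_{\sd} F$ for \emph{all} $F \in \Pi_{2t}$, in particular for $F = |f|^2$, which gives \eqref{tight} immediately. The reverse direction is the one that requires an argument: from \eqref{tight} we only know the quadrature identity on the cone $\{|f|^2 : f \in \Pi_t\}$, and we must upgrade it to all of $\Pi_{2t}$. The key step is that this cone spans $\Pi_{2t}|_{\sd}$ as a linear space --- equivalently, every polynomial of degree $\le 2t$ restricted to the sphere is a linear combination of squares (or products $fg$ with $f,g \in \Pi_t$) of polynomials of degree $\le t$. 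This follows from the polarization identity $fg = \tfrac14\big((f+g)^2 - (f-g)^2\big)$ together with the fact that products of two degree-$\le t$ monomials already exhaust the degree-$\le 2t$ monomials; one may also invoke that $\Pi_t \cdot \Pi_t = \Pi_{2t}$ as function spaces and that the bilinear map $(f,g) \mapsto fg$ is surjective onto $\Pi_{2t}$.

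The main (and really the only) obstacle is making this spanning/polarization argument clean, especially keeping track of whether the paper intends $\Pi_t$ to consist of real or complex polynomials. In the real case polarization is elementary; in the complex case one applies polarization to the sesquilinear form and also uses that $|f|^2$ for $f$ complex produces enough functions via the parallelogram-type identity $4\,\overline{f}g = |f+g|^2 - |f-g|^2 + \ii|f+\ii g|^2 - \ii|f-\ii g|^2$, after which $\mathrm{Re}(\overline f g)$ and $\mathrm{Im}(\overline f g)$ together span $\Pi_{2t}$. Once the spanning is established, linearity of both $F \mapsto \frac{1}{\ell(\gamma)}\int_\gamma F$ and $F \mapsto \int_{\sd} F$ extends the identity from the generating set to all of $\Pi_{2t}$, which is exactly the $2t$-design property. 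I would also note in passing that this lemma is the curve analogue of the classical fact that a point set is a $2t$-design iff the associated cubature is tight on $\Pi_t$, and cite that this is the $p=2$ instance making the connection between Definition~\ref{defmz} and $t$-design curves precise.
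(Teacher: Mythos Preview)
Your proposal is correct and follows essentially the same route as the paper: the forward direction is immediate from $|f|^2\in\Pi_{2t}$, and for the converse the paper also uses the polarization identity $fg=\tfrac14\big((f+g)^2-(f-g)^2\big)$ to conclude $\spann\{|f|^2:f\in\Pi_t\}=\spann\{fg:f,g\in\Pi_t\}=\Pi_{2t}$. Your discussion of the complex case is extra (the paper defines $\Pi_t$ with real coefficients), but otherwise the arguments coincide.
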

\begin{proof}
The obvious relation $|f|^2\in\Pi_{2t}$ for $f\in\Pi_t$ implies the
necessity. Conversely, the sufficiency  follows from the polarization
$f g = \frac{1}{4} (f+g)^2 - \frac{1}{4}(f-g)^2$, which implies  the
identity of subspaces 
\begin{equation*}
\spann\{|f|^2 : f\in\Pi_t\}= \spann\{f g : f,g\in\Pi_t\}\,.
\end{equation*}
Clearly the vector space on the right-hand-side coincides with the polynomials $\Pi_{2t}$.
\end{proof}

\subsection{Existence of \mz\ curves}

For fixed degree $t$ it does not seem too difficult to find a curve
that satisfies a \mz\ inequality of the type~\eqref{mzdef}. 
It amounts to choosing $\gamma$ such that $f\in\Pi_t$ and $f\not
\equiv 0$ imply that $f|_\gamma \not \equiv 0$, i.e., $\gamma$ must
not be contained in the zero set of a non-zero polynomial in $\Pi_t$.
For this it suffices that the covering radius of $\gamma $ is small
enough, see \cite[Thm.~2.2]{EG:2023} for this argument. 

However, if we require \mz\ inequalities \eqref{mzdef} for
\emph{all} $t\in\N$ with uniform constants $A_p,B_p>0$ independent of
$t$, then we face a much more challenging problem. 

We now formulate our main result on \mz\ inequalities for curves. In
addition to the existence, the following theorem clarifies how the
constants in the \mz\ inequalities depend on $p$. 
\begin{tm}\label{tm:MZ}
There are constants $0<A_d\leq B_d<\infty$ and for each $\varepsilon\in(0,1)$ there is another constant $C_{d,\varepsilon}>0$ and a sequence of geodesic
cycles $(\gamma^{(t)}_{d,\varepsilon})_{t\in\bN}$ in $\sd$ with the following
properties:

(i) For all $p\in
[1,\infty ]$ and all degrees  $t\in\N$, the norm equivalence 
\begin{align}
A_d^{1/p}(1-\varepsilon)\|f\|_{L^p(\sd)} &\leq  \| f\|_{L^p(\gamma^{(t)}_{d,\varepsilon})}   \leq B_d^{1/p}(1+\varepsilon)  \|f\|_{L^p(\sd)}\,, \label{eq:4} 
\end{align}
holds for all $f\in\Pi_t$, and

(ii) the length of the curves is bounded by
$$
\ell(\gamma^{(t)}_{d,\varepsilon})  \leq C_{d,\varepsilon} t^{d-1}\,.
$$
\end{tm}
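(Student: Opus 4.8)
The plan is to build the curve $\gamma^{(t)}_{d,\varepsilon}$ by connecting a sufficiently dense set of \mz\ points with geodesic arcs and then control the path integral against the genuine spherical integral by a localization/Lipschitz argument. First I would invoke the existence of \mz\ point families on $\sd$: there is a sequence of point sets $X_t = \{x_1^{(t)},\dots,x_{N_t}^{(t)}\}$ with $N_t \leq c_d t^d$ such that $X_t$ is a $\delta_t$-covering of $\sd$ with $\delta_t \sim t^{-1}$ and the quadrature weights are (essentially) equal; such families are produced in the Bondarenko--Radchenko--Viazovska construction and in the density theorems of Marzo and others, cited as \cite{Bondarenko:2011kx,marzo07,Mhaskar:2002ys}. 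The key point is that we may take the mesh norm $\delta_t \leq \eta/t$ for an arbitrarily small absolute constant $\eta=\eta(\varepsilon,d)$ while keeping $N_t \lesssim t^d$.

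Next I would set up the combinatorial routing step: connect the points of $X_t$ into a single closed cycle $\gamma^{(t)}$ by geodesic arcs so that every arc has length at most (a constant times) $\delta_t$, and each point is visited a bounded number of times. A short-arc Hamiltonian-type cycle exists because the points form a $\delta_t$-net; e.g. impose a quad-tree/area-regular partition of $\sd$ into $\sim t^d$ cells of diameter $\sim t^{-1}$ (here the ``area-regular partition'' keyword enters, cf.\ the partitions used in \cite{Bondarenko:2011kx}), pick one net point per cell, and traverse the cells along a space-filling order. The total length is then $\ell(\gamma^{(t)}) \leq (\#\text{arcs})\cdot O(\delta_t) \leq O(t^d \cdot t^{-1}) = O(t^{d-1})$, which gives part (ii) with a constant $C_{d,\varepsilon}$ that absorbs the dependence on $\eta$.

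For part (i), the heart of the matter is to compare $\frac{1}{\ell(\gamma^{(t)})}\int_{\gamma^{(t)}} |f|^p$ with $\int_{\sd}|f|^p$ for $f\in\Pi_t$. I would split $\int_{\gamma^{(t)}}|f|^p$ as a sum over the constituent geodesic arcs; along each arc $\gamma_{x_j,x_{j+1}}$ the value $|f(\gamma(s))|^p$ stays within a factor $(1\pm\varepsilon')$ of $|f(x_j)|^p$ because $g=|f|^p$ is a polynomial-times-modulus of degree $\lesssim pt$ and hence satisfies a Bernstein/Markov-type bound $\|\nabla g\|_\infty \leq C\, t\, \|g\|_\infty$ on $\sd$ — more carefully, using $|\,|f(x)|^p-|f(y)|^p\,| \leq p\,\max(|f(x)|,|f(y)|)^{p-1}|f(x)-f(y)|$ together with the Bernstein inequality $|f(x)-f(y)|\leq t\,\|f\|_{L^\infty(\sd)}\dist(x,y)$ and Nikolskii to pass between $L^\infty$ and $L^p$; since the arc length is $\leq \eta/t$, the oscillation is $\leq C\eta$ times the local size. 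Summing and using that the arc lengths are comparable to the areas of the corresponding partition cells (area-regularity!), the weighted sum $\frac{1}{\ell(\gamma^{(t)})}\sum_j \dist(x_j,x_{j+1})|f(x_j)|^p$ is, up to $(1\pm C\eta)$, a Riemann/\mz-type sum for $\int_{\sd}|f|^p$, which by the \mz\ property of $X_t$ lies in $[A_d,B_d]\cdot\|f\|_{L^p(\sd)}^p$. Choosing $\eta$ small enough that $(1\pm C\eta) \subseteq (1-\varepsilon,1+\varepsilon)$ (after taking $p$-th roots, which only helps) yields \eqref{eq:4}. The case $p=\infty$ follows by a direct covering argument: density $\delta_t\leq\eta/t$ plus the Bernstein inequality forces $\sup_\gamma|f|$ to be within $(1\pm\varepsilon)$ of $\|f\|_{L^\infty(\sd)}$.

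The main obstacle I anticipate is making the ``weighted sum along arcs $\approx$ quadrature sum at points'' comparison fully rigorous and $p$-uniform: one must ensure the arc-length weights $\dist(x_j,x_{j+1})$, after normalization by $\ell(\gamma^{(t)})$, genuinely mimic the equal \mz\ weights $\sim 1/N_t \sim \vol(\text{cell})$, which forces the routing to produce arcs of \emph{comparable} length (not merely bounded length) — i.e.\ the area-regular partition and the choice of one representative per cell must be done carefully, and the degree inflation $f\mapsto|f|^p$ (non-smooth at zeros of $f$ when $p$ is not an even integer) must be handled by the pointwise inequality above rather than by differentiating $|f|^p$ directly. Everything else is a routine assembly of Bernstein/Nikolskii estimates, the \mz\ point machinery, and the intermediate steps already isolated in the excerpt.
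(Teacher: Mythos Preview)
Your overall architecture---start from an area-regular partition of size $\sim \eta/t$, route a geodesic cycle through one point per cell, and compare the path integral to the discrete \mz\ sum---is the same as the paper's, and the length bound and the $p=\infty$ case are fine. But the core step for $1\le p<\infty$ has a real gap. You propose to control $\bigl||f(\gamma(s))|^p-|f(x_j)|^p\bigr|$ along an arc of length $\le\eta/t$ via $|a^p-b^p|\le p\max(|a|,|b|)^{p-1}|a-b|$ together with Bernstein $|f(x)-f(y)|\le t\,\|f\|_{L^\infty(\sd)}\dist(x,y)$ and then Nikolskii. This does \emph{not} produce a multiplicative factor $(1\pm C\eta)$ in front of the local value $|f(x_j)|^p$; it produces an \emph{additive} error $p\eta\,\|f\|_{L^\infty(\sd)}^p$ per arc, and after averaging over the curve the total error is $p\eta\,\|f\|_{L^\infty(\sd)}^p$. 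Nikolskii on $\sd$ only gives $\|f\|_{L^\infty(\sd)}^p\lesssim t^d\|f\|_{L^p(\sd)}^p$, so the error term carries a factor $p\,t^d$ and is uniform in neither $t$ nor $p$. The underlying issue is that Bernstein bounds the oscillation of $f$ by the \emph{global} sup-norm, not by the local size $|f(x_j)|$, so on arcs where $f$ is small you lose all control.

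The paper sidesteps this completely: on each arc $\gamma_j\subset R_j$ the mean value theorem gives $\int_{\gamma_j}|f|^p=\ell(\gamma_j)\,|f(y_j)|^p$ for some $y_j\in R_j$, and the key input is that the discrete \mz\ inequality (Theorem~\ref{thm1}) holds for \emph{any} selection of one point per cell, even an $f$-dependent one, with constants independent of the selection and of $p$. No Bernstein, no Nikolskii. This is exactly the missing idea in your argument. You also correctly flag, but do not resolve, the second obstacle that the normalized arc-length weights must be genuinely comparable to $1/n$; the paper's device is to place \emph{two} points in each cell at a fixed distance $c_{\mathrm{in}}n^{-1/d}$ (using the inscribed ball from Theorem~\ref{tm:nice partition}), so that the internal arcs all have the \emph{same} length and already deliver the lower bound, while the external arcs between adjacent cells are handled by the same mean-value trick together with a kissing-number estimate and only affect the upper constant $B_d$.
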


For $p=\infty$, we may always take $A_d ^{1/p} =1$ and use the
trivial upper bound $ \| f\|_{L^\infty(\gamma^{(t)}_{d,\varepsilon})}   \leq
\|f\|_{L^\infty(\sd)}$, so that 
the appropriate inequalities in place of  \eqref{eq:4}
are 
\begin{equation*}
(1-\varepsilon)\|f\|_{L^\infty(\sd)} \leq \| f\|_{L^\infty(\gamma^{(t)}_{d,\varepsilon})} \leq \|f\|_{L^\infty(\sd)}\,.
\end{equation*}

The upper bound on the arc length
$\ell(\gamma^{(t)}_{d,\varepsilon})\leq C_{d,\varepsilon} t^{d-1}$ 
means that the \mz\ inequalities are achieved by a
sequence of curves whose arc lengths match the order of the lower bound $\ell(\gamma^{(t)}) \geq c_d  t^{d-1}$ in \eqref{eq:asympt lower bound curve}.

According to Lemma \ref{lemma:design=mz}, every sequence
$(\gamma^{(t)})_{t\in\N}$ of $2t$-design curves satisfies \eqref{eq:4}
for $p=2$. Therefore the existence of \mz\ curves in dimension
$d=2,3$ for  $p=2$ is  covered by the constructions in \cite{EG:2023,Lindblad1}. 

\subsubsection*{Strategy:}
Almost all constructions of \mz\ inequalities start with a partition of the
underlying space, in our case of the $d$-sphere $\sd $. 
Let $\cR = \{ R_1, \dots , R_n\}$ be a partition of $\sd $. We use a
relaxed definition and assume only  that $\bigcup _{j=1}^n R_j = \sd $
and that the intersections $R_j \cap R_k $, for $j\neq k$, have measure zero.

The idea is to start with a sufficiently \emph{nice} partition
$\cR=\{R_1,\ldots,R_n\}$, so that points $\{x_1,\ldots,x_n\}$ with
$x_j\in R_j$ satisfy discrete \mz\ inequalities
$$
(1-\varepsilon) \|f\|_{L^p(\sd)} \leq \left(\frac{1}{n} \sum _{j=1}^n |f(x_j)|^p\right)^{1/p} \leq (1+\varepsilon) \|f\|_{L^p(\sd)}
\, .
$$
We then build a geodesic cycle connecting these points by geodesic
arcs and verify that the \mz\ inequalities for the points induce the
corresponding  inequalities for the curve.

We split the proof of Theorem \ref{tm:MZ} into three  subsections.

\subsection{Partitions}
Following~\cite{Bondarenko:2015eu} a partition $\cR $ is called
area-regular if all patches $R_j$ have the same measure, i.e.,
$|R_j|=1/n$. 
The size of a partition $\cR=\{R_1,\ldots,R_n\}$ is
\begin{equation}
  \label{eq:1}
  \|\cR \| := \max _{j=1, \dots ,n} \mathrm{diam} \,  R_j \, .
\end{equation}

The main theorem about partitions and \mz\ inequalities on the sphere can be
summarized by saying that every choice of points from a sufficiently
fine partition yields a \mz\ inequality. A precise version goes as
follows. See~\cite{FilbirNew,Bondarenko:2011kx,Bondarenko:2015eu,marzo07,marzo08,marzo14,Mhaskar:2002ys}
for several 
variations. 

\begin{tm}\label{thm1}
There exists a
  constant $c_{d}>0$ such that, for every $\varepsilon \in (0,1)$, every area-regular partition $\cR = \{
  R_1, \dots , R_n\} $ of
  size
  \begin{equation}
    \label{eq:2}
    \|\cR \| \leq  c_{d} \varepsilon t^{-1}
  \end{equation}
has the following property: for every  $p\in [1,\infty )$, every collection of points $\{x_1, \dots ,
x_n \}$ with $x_j \in {R}_j$ yields equal-weight \mz\
inequalities~\footnote{\cite{FilbirNew} assumes $x_j$ to be in the interior of $R_j$ but this is not necessary.}
\begin{equation}
  \label{eq:3}
  (1-\varepsilon ) \|f\|_{L^p} \leq \left(\frac{1}{n}\sum _{j=1}^n  |f(x_j)|^p\right)^{1/p} \leq    (1+\varepsilon ) \|f\|_{L^p}\,, \qquad \text{for all }
  f\in \Pi _t \, .
\end{equation}
For $p=\infty$ one has  $ (1-\varepsilon ) \|f\|_{L^\infty} \leq
\max_{j=1,\ldots,n}|f(x_j)| \leq  \|f\|_{L^\infty}$ for all $
  f\in \Pi _t $.
\end{tm}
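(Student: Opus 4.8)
The plan is to follow the by-now standard route to Marcinkiewicz--Zygmund inequalities on $\sd$, which is the spherical analogue of the classical argument on the circle: use the area-regularity to convert the equal-weight sum into an integral, estimate the oscillation of $|f|^p$ across each patch by a Bernstein/Markov-type inequality for spherical polynomials, and sum up. Fix $f\in\Pi_t$, normalize $\|f\|_{L^p(\sd)}=1$, and abbreviate $\delta:=\|\cR\|$. Because all patches have equal measure $|R_j|=1/n$, one has $\tfrac1n\sum_{j}|f(x_j)|^p=\sum_j\int_{R_j}|f(x_j)|^p\,\mathrm{d}\sigma$ and $\int_{\sd}|f|^p=\sum_j\int_{R_j}|f(y)|^p\,\mathrm{d}\sigma(y)$, so everything comes down to estimating $\sum_j\int_{R_j}\bigl(|f(y)|^p-|f(x_j)|^p\bigr)\,\mathrm{d}\sigma(y)$; note that only $\diam R_j\le\delta$ and $x_j\in R_j$ enter, which is precisely why $x_j$ need not be interior to $R_j$, as remarked in the footnote.

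The upper bound is the easy half. For $y\in R_j$ one has $|f(x_j)|\le\sup_{R_j}|f|\le Mf(y)$, where $Mf(y):=\sup_{\dist(w,y)\le\delta}|f(w)|$ (since $\diam R_j\le\delta$), hence $\tfrac1n\sum_j|f(x_j)|^p\le\|Mf\|_{L^p(\sd)}^p$; invoking the enlargement (maximal-function) inequality $\|Mf\|_{L^p(\sd)}\le(1+C_d\delta t)\|f\|_{L^p(\sd)}$ for spherical polynomials, with $C_d$ independent of $p$, gives $\tfrac1n\sum_j|f(x_j)|^p\le(1+C_d\delta t)^p\le(1+\varepsilon)^p$ as soon as $C_d\delta t\le\varepsilon$. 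For the lower bound one estimates $|f(y)|^p-|f(x_j)|^p$ by integrating $\tfrac{\mathrm{d}}{\mathrm{d}s}|f\circ\gamma|^p$ along the minimizing geodesic $\gamma$ joining $x_j$ to $y$, using the pointwise bound $\bigl|\tfrac{\mathrm{d}}{\mathrm{d}s}|f\circ\gamma|^p\bigr|\le p\,|f\circ\gamma|^{p-1}\,|\nabla_{\sd}f\circ\gamma|$, and then controls the resulting sum by the $L^p$-Bernstein inequality $\|\,|\nabla_{\sd}f|\,\|_{L^p(\sd)}\le C_dt\|f\|_{L^p(\sd)}$, the enlargement inequality applied to (caps slightly larger than) the patches, and Hölder's inequality with exponents $\tfrac{p}{p-1},p$; the key point is that the extra factor $p$ coming from the derivative of $|f|^p$ is absorbed by the corresponding factor in the target gains $1-(1-\varepsilon)^p$ and $(1+\varepsilon)^p-1$ (each of which is at least $\varepsilon$ and of order $p\varepsilon$ for small $\varepsilon$), so that after imposing $\delta\le c_d\varepsilon t^{-1}$ the threshold constant $c_d$ comes out independent of $p$. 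The remaining case $p=\infty$ is separate and elementary: the upper bound $\max_j|f(x_j)|\le\|f\|_{L^\infty(\sd)}$ is trivial, while for the lower bound one picks $x^\ast$ with $|f(x^\ast)|=\|f\|_{L^\infty(\sd)}$, takes the index $j$ with $x^\ast\in R_j$, and uses the Bernstein inequality $\|\nabla_{\sd}f\|_{L^\infty(\sd)}\le C_dt\|f\|_{L^\infty(\sd)}$ to get $|f(x_j)|\ge(1-C_d\delta t)\|f\|_{L^\infty(\sd)}\ge(1-\varepsilon)\|f\|_{L^\infty(\sd)}$.

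I expect the genuine obstacle to be the uniformity of all constants in $p\in[1,\infty]$: the two polynomial inequalities that power the argument --- the $L^p$-Bernstein/Markov inequality for tangential gradients and the maximal/enlargement inequality on caps of radius $\lesssim t^{-1}$ --- have to be available with constants that do not degenerate as $p\to1$ or $p\to\infty$, and one mild but real nuisance is that $|\nabla_{\sd}f|$ is not itself a polynomial (only $|\nabla_{\sd}f|^2$ is, of degree $\le 2t$), so one must work either with the vector-valued polynomial $\nabla_{\sd}f$ or with $|\nabla_{\sd}f|^2$ and take care when $p<2$. All of this is by now classical and is contained, in various forms, in the references \cite{FilbirNew,Bondarenko:2011kx,Bondarenko:2015eu,marzo07,marzo08,marzo14,Mhaskar:2002ys}; the only new point is to verify that the statement survives the relaxations we have made --- arbitrary (not necessarily interior) choices $x_j\in R_j$ and the weakened notion of partition --- which it does, because every estimate above is phrased purely in terms of diameters and measures.
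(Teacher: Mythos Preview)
The paper does not actually prove this theorem: it is stated as a known result and attributed to the references \cite{FilbirNew,Bondarenko:2011kx,Bondarenko:2015eu,marzo07,marzo08,marzo14,Mhaskar:2002ys}, with no argument given in the paper beyond the footnote remarking that the interiority assumption on $x_j$ can be dropped. So there is no ``paper's own proof'' to compare against.

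That said, your outline is a faithful summary of the standard route taken in those references (in particular \cite{Mhaskar:2002ys,marzo07,FilbirNew}): reduce to the oscillation of $|f|^p$ on each patch via area-regularity, control this by the $L^p$-Bernstein inequality for the tangential gradient together with the enlargement (local maximal) inequality for spherical polynomials on caps of radius $\asymp t^{-1}$, and track that all constants are uniform in $p$. Your identification of the two technical pressure points --- uniformity in $p$ of the polynomial inequalities, and the fact that $|\nabla_{\sd}f|$ is not a polynomial --- is accurate, and your observation that only diameters and measures enter (so $x_j\in R_j$ need not be interior and the partition need only be measure-disjoint) is exactly the content of the paper's footnote. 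The one place your sketch is genuinely thin is the lower bound: after bounding $\bigl||f(y)|^p-|f(x_j)|^p\bigr|$ by $p\int_0^{\dist(y,x_j)}|f|^{p-1}|\nabla_{\sd}f|$ along a geodesic, summing over $j$ and applying H\"older requires an additional enlargement step to pass from integrals along geodesics back to integrals over $\sd$, and the bookkeeping that the resulting bound is $\le C_d' p\,\delta t\,\|f\|_{L^p}^p$ with $C_d'$ independent of $p$ deserves to be written out rather than asserted; but this is a matter of completeness, not correctness.
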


To match the size requirements \eqref{eq:2} on the partition, we recall a simplified version of the existence results in \cite{Bondarenko:2015eu,Gigante:2017pi}.
\begin{prop} \label{thm2}
There is a constant $C_{\mathrm{diam}}>0$ depending only on the dimension $d$ such that, for every $n\in \bN $, there exists an area-regular partition
  $\cR=\{R_1,\ldots,R_n\} $ of $\sd $ satisfying 
  \begin{equation}\label{eq:Cdiam n}
  \|\cR \| \leq C_{\mathrm{diam}} n^{-1/d}.
  \end{equation}
\end{prop}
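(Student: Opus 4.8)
The plan is to construct the area-regular partition of $\sd$ by an iterative bisection scheme, splitting a region of measure $m$ into two regions of measure $m/2$ each while keeping all diameters comparable to the $d$-th root of the measure. First I would recall that $\sd$ itself can be covered by boundedly many (depending only on $d$) coordinate charts, or more conveniently, one reduces to the following local statement: any geodesic ball (or cap) $B$ of radius $\rho\le\rho_0$ in $\sd$ can be partitioned into $k$ pieces of equal measure $|B|/k$, each with diameter $\le c\,(|B|/k)^{1/d}$, with constants depending only on $d$. Near a point the sphere is bi-Lipschitz equivalent to a Euclidean ball, so it suffices to prove the Euclidean version: a box in $\R^d$ (or a ball) splits into $k$ congruent-volume sub-boxes of controlled diameter, which is completely elementary — bisect repeatedly along the longest side. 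The distortion of the exponential map at the sphere only changes the constants by a bounded factor as long as $\rho_0$ is fixed.

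The key steps, in order, are as follows. (1) Fix a finite atlas of $\sd$ by caps $U_1,\dots,U_M$ of fixed radius $\rho_0$ with bi-Lipschitz chart maps $\phi_i$ whose distortion is bounded by a dimensional constant; choose a measurable partition of $\sd$ subordinate to this cover into $M$ pieces $V_1,\dots,V_M$ of equal measure $1/M$, each of diameter $\le 2\rho_0$. (2) Given $n\in\bN$, write $n=\sum_{i=1}^M n_i$ with $n_i=\lfloor n/M\rfloor$ or $\lceil n/M\rceil$, so each $n_i\asymp n/M$. (3) Inside each $V_i$, transport the problem via $\phi_i$ to a bounded region in $\R^d$ of volume $\asymp 1/M$ and partition it into $n_i$ equal-volume pieces by repeated coordinate bisection (always halving along a longest side); since each bisection at most doubles the number of pieces and roughly halves volume while the ratio of longest to shortest side stays bounded, after $\asymp\log_2 n_i$ steps one obtains $n_i$ pieces each of Euclidean diameter $\le c_d' (1/(M n_i))^{1/d}\asymp c_d' n^{-1/d}$. (Minor bookkeeping is needed when $n_i$ is not a power of two: split the region into two parts whose volumes are in the ratio $\lfloor n_i/2\rfloor : \lceil n_i/2\rceil$ and recurse — the side-ratio stays bounded by, say, $3$.) (4) Pull these pieces back through $\phi_i^{-1}$; the bi-Lipschitz bound turns the volume and diameter estimates into estimates on $\sd$ with only a dimensional loss in the constant, giving $|R_j|=1/n$ for all $j$ and $\|\cR\|\le C_{\mathrm{diam}}\, n^{-1/d}$.

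The main obstacle is not any single hard estimate but rather making the iterative bisection genuinely \emph{area-regular}, i.e.\ producing pieces of \emph{exactly} equal measure rather than merely comparable measure, while simultaneously controlling diameters. Equality of measures is easy for dyadic $n_i$ but requires the uneven split trick above for general $n_i$; one must check that this uneven splitting does not let the aspect ratio of the boxes degenerate — this is where one uses that splitting a box of side-ratios bounded by $R$ along its longest side into two parts of volume ratio between $1/3$ and $3$ yields boxes of side-ratios bounded by $\max(R,3)$, so the bound is uniform over all the $\asymp\log n$ levels. A secondary, purely administrative point is that one should cite \cite{Bondarenko:2015eu, Gigante:2017pi} for the clean statement and simply indicate that the proof is the standard bisection argument adapted to the sphere via bounded-distortion charts; since the proposition is explicitly labelled ``a simplified version of the existence results in \cite{Bondarenko:2015eu,Gigante:2017pi}'', a full from-scratch proof is arguably unnecessary and a short sketch of the bisection construction together with the reference suffices.
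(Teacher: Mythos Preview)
The paper does not prove this proposition at all; it is quoted directly from \cite{Bondarenko:2015eu,Gigante:2017pi} without argument, exactly as you anticipate in your final paragraph. So in that sense your closing remark --- that a reference plus a brief indication suffices --- matches the paper's treatment.

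That said, the sketch you offer as a stand-in proof has a genuine gap that would have to be fixed before it could be called a proof. In step~(1) you take the initial pieces $V_1,\dots,V_M$ to have \emph{equal} measure $1/M$, and in step~(3) you subdivide $V_i$ into $n_i$ pieces of equal measure, so each resulting patch has measure $1/(Mn_i)$. Since the $n_i$ are not all equal when $M\nmid n$, the patches coming from different $V_i$ have different measures, and the partition is not area-regular; your conclusion $|R_j|=1/n$ in step~(4) does not follow from what precedes it. The fix is easy --- choose $|V_i|=n_i/n$ rather than $1/M$ --- but as written the construction fails to deliver the exact equal-area property that is the whole point of the statement.

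A second, smaller issue: your step~(3) speaks of ``halving along a longest side'' and of controlling ``side-ratios'', language that presupposes the region $\phi_i(V_i)$ is a box. It is not; $V_i$ is merely a measurable piece of a cap, and its chart image has no sides. One can enclose it in a box and bisect the box, but then intersecting with $\phi_i(V_i)$ destroys the equal-volume property; or one can bisect $\phi_i(V_i)$ itself by hyperplanes chosen to give the correct mass split, but then ``longest side'' and the aspect-ratio bookkeeping need to be reformulated (e.g.\ in terms of the diameter of the region versus the $d$-th root of its measure). The actual constructions in \cite{Bondarenko:2015eu,Gigante:2017pi} handle this by slicing with nested caps and controlling inradii and diameters directly, which is why a bare citation is the cleanest route here.
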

Comparing the required   size  $\|\cR \| \leq c_{d} \varepsilon
t^{-1}$  in \eqref{eq:2} with $\|\cR\|\leq C_{\mathrm{diam}} n^{-1/d}$
in \eqref{eq:Cdiam n},  we should choose the
number of patches as 
\begin{equation}\label{eq:t then n}
n \geq  \frac{C_{\diam}^d}{\varepsilon^d c_{d}^d}t^d\,.
\end{equation} 

Note that the constant $\frac{C_{\diam}^d}{\varepsilon^d c_{d}^d}$ only depends on $d$ and $\varepsilon$ but not on $p$.

To transfer the \mz\ inequalities from points to curves, we need to investigate the geometry of the partition more closely. 
We define two types of neighborhoods of a patch $R_j$. Let 
\begin{equation*}
  M_j :=  \{k: \overline{R_k} \cap \overline{R_j} \neq \emptyset \}\,,
\end{equation*}
and define the first neighborhood by all patches that touch or ``kiss'' $R_j$, 
\begin{equation*}
U_j := \bigcup _{k\in M_j} R_k\,.
\end{equation*}
We call $\#M_j$ the kissing number of $R_j$. 

For a convex version of the kissing number,  we denote the closed
geodesic convex hull of $U_k$ by $\mathrm{conv}(U_k)$. By definition,
$\mathrm{conv}(U_k)$ contains all geodesic arcs between points in
$U_k$. Set  
\begin{equation*}
  N_j :=  \{k: \mathrm{conv}(U_k) \cap \overline{R_j} \neq \emptyset \}
\end{equation*}
and define the second type of neighborhood by
\begin{equation*}
V_j:=\bigcup_{k\in N_j} \mathrm{conv}(U_k))\,.
\end{equation*}

The following lemma provides  an upper bound for  the kissing numbers
that depends  only on the dimension $d$ and the constant $C_{\mathrm{diam}}$, but not on the actual partition. 
\begin{lemma}\label{lemma:kiss me}
If $\cR=\{R_1,\ldots,R_n\}$ is an area-regular partition of $\sd$ satisfying  $\|\cR \| \leq C_{\mathrm{diam}} n^{-1/d}$, then 
$$
\#M_j\leq \# N_j \leq C_{\mathrm{kiss}},
$$
for some constant $C_{\mathrm{kiss}}\in\N$ that  depends only on $d$ and $C_{\mathrm{diam}}$. 
\end{lemma}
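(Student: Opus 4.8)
The plan is to bound the kissing numbers by a volume-packing argument, exploiting that all patches have equal measure $1/n$ while their diameters are controlled by $C_{\mathrm{diam}}n^{-1/d}$. First I would record the two elementary geometric facts that drive everything: (a) since $|R_j|=1/n$ for all $j$ and $\sd$ has unit measure, any collection of $m$ pairwise-almost-disjoint patches fills a region of measure exactly $m/n$; and (b) a subset of $\sd$ of diameter $\le \delta$ is contained in a geodesic ball of radius $\delta$, and there are standard two-sided bounds $c_d' r^d \le |B(x,r)| \le c_d'' r^d$ for the normalized measure of a geodesic ball of radius $r\le \pi$ (valid for $r$ bounded away from the diameter of $\sd$, which is the regime we are in once $n$ is large; small $n$ is a finite set of cases absorbed into the constant). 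Set $\delta := C_{\mathrm{diam}}n^{-1/d}$, so $\|\cR\|\le\delta$.

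The estimate for $\#M_j$ comes first. If $k\in M_j$, then $\overline{R_k}\cap\overline{R_j}\ne\emptyset$, so $R_k$ lies within distance $\diam R_j + \diam R_k \le 2\delta$ of any fixed point $x\in\overline{R_j}$; hence $U_j=\bigcup_{k\in M_j}R_k \subseteq B(x,2\delta)$ (inflated by a further $\delta$, so really $B(x,3\delta)$, the precise constant being immaterial). Taking measures and using that the $R_k$, $k\in M_j$, are pairwise almost disjoint, $\#M_j \cdot \tfrac1n = |U_j| \le |B(x,3\delta)| \le c_d'' (3\delta)^d = c_d''\,3^d\,C_{\mathrm{diam}}^d\, n^{-1}$, so $\#M_j \le c_d''\,3^d\,C_{\mathrm{diam}}^d =: C_{\mathrm{kiss}}'$, a constant depending only on $d$ and $C_{\mathrm{diam}}$. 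The key point is that the factor $1/n$ cancels, which is exactly what makes the bound $n$-independent.

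For $\#N_j$ the same scheme works once one controls the diameters of the convex hulls $\mathrm{conv}(U_k)$. Here I would argue: $U_k\subseteq B(y_k, 3\delta)$ for a suitable center $y_k$ as above, a geodesic ball of radius $3\delta$ is geodesically convex provided $3\delta < \pi/2$ (again automatic for $n$ large, finitely many small $n$ into the constant), hence $\mathrm{conv}(U_k)\subseteq B(y_k,3\delta)$ as well, giving $\diam\,\mathrm{conv}(U_k)\le 6\delta$. Now if $k\in N_j$ then $\mathrm{conv}(U_k)\cap\overline{R_j}\ne\emptyset$, so $U_k\subseteq \mathrm{conv}(U_k)$ lies within distance $6\delta+\delta = 7\delta$ of a fixed $x\in\overline{R_j}$; since each such $U_k$ contains the patch $R_k$ and distinct $R_k$ are almost disjoint, $\#N_j\cdot\tfrac1n \le |B(x,8\delta)| \le c_d''\,8^d\,C_{\mathrm{diam}}^d\,n^{-1}$, whence $\#N_j \le c_d''\,8^d\,C_{\mathrm{diam}}^d =: C_{\mathrm{kiss}}$, depending only on $d$ and $C_{\mathrm{diam}}$; enlarging $C_{\mathrm{kiss}}$ if necessary gives $\#M_j\le\#N_j\le C_{\mathrm{kiss}}$ and, since these are integers, one may take $C_{\mathrm{kiss}}\in\N$.

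The main obstacle is the geodesic-convexity step: one must ensure that the radius-$O(\delta)$ balls used to trap the $U_k$ are small enough to be geodesically convex (so that taking convex hulls does not escape them), and more carefully that $\mathrm{conv}(U_k)$, which is the hull of a union of patches rather than of a single ball, still fits inside such a ball. This is genuinely a small-ball statement and only holds once $n$ exceeds a threshold $n_0(d,C_{\mathrm{diam}})$ forcing $\delta$ small; for $n\le n_0$ one has only finitely many partitions-worth of bounded kissing numbers, which are trivially bounded by, say, $n_0$, so the final constant is the maximum of the two contributions and still depends only on $d$ and $C_{\mathrm{diam}}$. Everything else is a routine volume comparison.
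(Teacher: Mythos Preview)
Your argument is correct and follows essentially the same volume-packing route as the paper: trap $U_k$ and hence $\mathrm{conv}(U_k)$ in a geodesic ball of radius $O(\delta)$, then compare the total measure $\#N_j/n$ of the almost-disjoint patches $R_k$, $k\in N_j$, against the measure $O(\delta^d)=O(1/n)$ of a containing ball, so that the factor $n$ cancels. The only cosmetic differences are that the paper first observes $M_j\subseteq N_j$ and then bounds only $\#N_j$, and that you are slightly more explicit than the paper about the small-$n$ threshold needed for geodesic convexity of the enclosing balls.
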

\begin{proof}
Since $R_k \subseteq U_k$, the condition $\overline{R_k} \cap \overline{R_j} \neq \emptyset$ implies $\mathrm{conv}(R_k) \cap \overline{R_j} \neq \emptyset$ and thus the kissing numbers satisfy
\begin{equation*}
\#M_j\leq \# N_j\,. 
\end{equation*}

Furthermore, since $\mathcal{R}$ is an area-regular partition, we have $n|R_k|=1$ for all $k$ and $R_k\subseteq \mathrm{conv}(U_k)$ yields
\begin{equation}\label{eq:bound for M and V}
\# N_j  =\# N_j n |R_k| = n \Big|\bigcup_{k\in N_j}R_k \Big|\leq n |V_j|\,.
\end{equation}

Next, since by assumption $\|\cR \| \leq C_{\mathrm{diam}} n^{-1/d}$, each $U_k$ is contained in a ball of diameter $3C_{\mathrm{diam}} n^{-1/d}$, and by convexity $\mathrm{conv}(U_k)$ is contained in the same ball. 

We observe that the diameter of the neighborhood $V_j$ is bounded by $2\cdot 3 C_{\mathrm{diam}} n^{-1/d}+C_{\mathrm{diam}} n^{-1/d}=7C_{\mathrm{diam}} n^{-1/d}$. Therefore, its volume satisfies $|V_j| \leq C_{\mathrm{kiss}} n^{-1}$, for some constant $C_{\mathrm{kiss}}>0$ that depends on $d$ and $C_{\mathrm{diam}}$. Therefore, we derive $n |V_j|\leq C_{\mathrm{kiss}}$. Combined with \eqref{eq:bound for M and V}, we obtain $\#N_j\leq C_{\mathrm{kiss}}$.
\end{proof}

Finally we require partitions whose patches contain sufficiently large convex balls. The following existence result \cite{Bondarenko:2015eu,Gigante:2017pi} strengthens Proposition \ref{thm2}. 
\begin{tm}\label{tm:nice partition}
There are constants $c_{\mathrm{in}}, C_{\diam}>0$ depending only on
the dimension $d$, such that, for every $n\in\N$, the following holds: there exists an area-regular partition $\cR=\{R_1,\ldots,R_n\}$ on $\sd$ of size $\|\cR\|\leq C_{\diam} n^{-1/d}$ such that each patch $R_j$ contains a spherical cap of radius $c_{\mathrm{in}}n^{-1/d}$. Furthermore, each patch $R_j$ can be chosen to be convex. 
\end{tm}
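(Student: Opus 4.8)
The plan is to first dispose of the inscribed-cap clause, which is a consequence of convexity together with the equal-measure and diameter bounds. Indeed, suppose $R_j\subseteq\sd$ is geodesically convex with $|R_j|=1/n$ and $\diam R_j\le C_{\diam}n^{-1/d}$. Choosing $p\in R_j$ and using the gnomonic chart centred at $p$ (central projection of the open hemisphere around $p$ onto the tangent plane at $p$), geodesic arcs are carried to straight segments, so $R_j$ maps to a genuinely convex body $\widetilde R_j\subseteq\rd$; on the $C_{\diam}n^{-1/d}$-ball around $p$ this chart is bi-Lipschitz with constants depending only on $d$ (for the finitely many $n$ below a fixed threshold the claim is trivial after enlarging the constants). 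Hence $\vol(\widetilde R_j)\asymp n^{-1}$ and $\diam\widetilde R_j\lesssim n^{-1/d}$, so John's theorem forces the inradius of $\widetilde R_j$ to be of order $n^{-1/d}$; pulling the inscribed Euclidean ball back produces a spherical cap of radius $c_{\mathrm{in}}n^{-1/d}$ contained in $R_j$. It therefore suffices to construct, for every $n\in\N$, a partition of $\sd$ into $n$ geodesically convex patches of equal measure $1/n$ and diameter at most $C_{\diam}n^{-1/d}$.

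For this I would use recursive bisection along great subspheres. The stability facts needed are that closed hemispheres are geodesically convex, that intersections of geodesically convex sets are geodesically convex, and that every geodesically convex proper subset of $\sd$ is contained in a closed hemisphere. Together these imply that slicing a geodesically convex $R\subsetneq\sd$ by a hyperplane through the origin splits it into two geodesically convex pieces, and that — as the normal of that hyperplane rotates — the measure on either side of the cut sweeps continuously over all of $[0,|R|]$, so every prescribed area ratio is attainable. A single great-subsphere cut of $\sd$ itself is forced to halve the measure, which clashes with odd $n$; this is handled at the top level by partitioning $\sd$ into a bounded number of lunes of nearly equal angle (lunes of angle at most $\pi$ are geodesically convex) carrying nearly equal integer cell counts. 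From then on every region is a geodesically convex proper subset, and one recursively slices a region hosting $k$ cells into two geodesically convex regions hosting $\lceil k/2\rceil$ and $\lfloor k/2\rfloor$ cells with areas in the same proportion. After $O(\log n)$ levels this produces $n$ geodesically convex patches, each of measure exactly $1/n$.

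The main obstacle is the diameter bound. Slicing carelessly — always in the same direction — would create thin slivers, so at each step the cutting great subsphere must be chosen, among those realising the required area ratio, to cut the current patch roughly across its longest direction (for instance roughly orthogonal to the major axis of its John ellipsoid, read off in a gnomonic chart). That balanced bisections with such cuts keep the patches of bounded eccentricity, so that $m$ levels of bisection shrink diameters by a factor $\asymp 2^{-m/d}$ and hence to $\asymp n^{-1/d}$, is the classical bounded-aspect-ratio estimate for binary space partitions of Euclidean convex bodies; I would transfer it to $\sd$ by passing to gnomonic charts once all patches have diameter below a fixed dimensional threshold (where geodesic convexity becomes ordinary convexity and lengths are distorted by a factor depending only on $d$), handling the finitely many earlier levels, on which patches are still large, by hand and absorbing them into the constants. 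Alternatively one may quote the equal-measure, small-diameter — even cap-containing — partitions of \cite{Bondarenko:2015eu,Gigante:2017pi} for the metric properties and run the bisection argument only to upgrade them to convex ones; either way, the eccentricity control through all $O(\log n)$ levels is where the real work lies.
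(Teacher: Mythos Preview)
The paper does not prove this theorem at all: it is stated as a quotation from \cite{Bondarenko:2015eu,Gigante:2017pi}, with no argument given. So there is no ``paper's own proof'' to compare against; you are attempting to supply what the paper simply imports.

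Your first reduction is clean and correct: once the patches are geodesically convex with measure $1/n$ and diameter $\lesssim n^{-1/d}$, passing through a gnomonic chart and invoking John's theorem (or just the trivial volume bound $\vol(\widetilde R_j)\le \diam(\widetilde R_j)^{d-1}\cdot 2\,\mathrm{inrad}(\widetilde R_j)$ for a convex body) gives an inscribed ball of the right radius, hence a spherical cap after pulling back. This is a nice observation and genuinely dispenses with one clause.

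The gap is exactly where you flag it yourself. The recursive bisection produces $n$ convex equal-area patches without difficulty, but the assertion that choosing each cut ``roughly across the longest direction'' keeps the aspect ratio bounded through all $O(\log n)$ levels is the entire content of the theorem, and you do not prove it --- you call it ``the classical bounded-aspect-ratio estimate for binary space partitions'' and say this is ``where the real work lies''. That estimate is not a black box one can simply cite: even in $\rd$, a single balanced cut orthogonal to the John major axis need not decrease the aspect ratio, and one has to argue that over a bounded number of consecutive levels the diameter contracts by a definite factor while the inradius does not collapse. Transferring this to the sphere through charts that change at every level adds another layer. Your ``alternative'' of quoting \cite{Bondarenko:2015eu,Gigante:2017pi} for the metric properties and then ``upgrading to convex'' by bisection has the same defect: once you re-bisect, the diameter bound has to be re-earned. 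As written, the proposal is a correct identification of the strategy and its hard point, but not a proof; the paper sidesteps all of this by citing the result.
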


\subsection{Construction of the curve from the partition}
We start with a  convex, area-regular partition $\cR $ of size  $\|\cR\|\leq C_{\diam} n^{-1/d}$ 
as in Theorem \ref{tm:nice partition}, so that the discrete  \mz\
inequalities of Theorem~\ref{thm1} hold. Given $t\in\N$, without loss of generality, we may assume 
\begin{equation}\label{eq:nt}
n = \frac{C_{\diam}^d}{\varepsilon^d c_{d}^d}t^d\in\N\,.
\end{equation} 

To this partition $\mathcal{R}$ we associate a graph as follows: its vertices are the patches $R_j$, and a vertex $R_k$ connects to $R_j$ if 
$\overline{R_k} \cap \overline{R_j} \neq \emptyset$. In this case, we
put \emph{two} edges between the vertex $R_j$ and $R_k$. The number of
 edges at $R_j$ is twice the kissing number $2\# M_j$, consequently by
 Lemma~\ref{lemma:kiss me} this graph has bounded degree. 

This graph is connected, see \cite[Lemma 4.1]{EG:2023}. Since we have
doubled the edges, by Euler's criterion on even vertex degree
\cite{Wilson:1998qa}, there is an Euler cycle on the graph. 

By definition, the Euler cycle traverses each edge exactly once and hence visits each patch $R_j$ at least once.   

Since $R_j$ contains a spherical cap of radius $c_{\mathrm{in}}n^{-1/d}$, we may choose \emph{two} points $x_{2j-1},x_{2j}$ in the inner spherical cap of $R_j$ such that
  \begin{equation}
    \label{eq:8}
  \dist(x_{2j-1}, x_{2j}) = c_{\mathrm{in}}n^{-1/d}\,.
  \end{equation}
Since they are contained in a convex subset of $R_j$, the geodesic
arcs $\gamma_j:=\gamma_{x_{2j-1},x_{2j}}$ from $x_{2j-1}$ to $x_{2j}$
are also  contained in $R_j$.  

The Euler cycle induces geodesic arcs
$\tilde{\gamma}_i:[0,1]\rightarrow\sd$ between points in  different patches, but some care is needed. After all, the Euler cycle is a combinatorial object that connects \emph{patches}. To construct the geodesic arcs, we must connect \emph{points} from one patch to another patch, and there is some choice since each patch contains two points.

We start at $R_1$ and agree upon the following construction rules for the arcs.
\begin{itemize}
\item[-] When visiting a patch for the first time, we go along $\gamma_j$, hence, at the beginning along $\gamma_1$. 
\item[-] Following the Euler cycle,  we visit the next patch, say
  $R_j$,   and always arrive at the odd indexed point $x_{2j-1}$.  
\item[-] If we visit $R_j$ for the first time, then we continue along
  $\gamma_j$ to $x_{2j}$. If we have already visited $R_j$ before, then we 
  proceed directly  according to the Euler cycle from $x_{2j-1}$ to $x_{2k-1}$
  in some adjacent patch $R_k$. 
\end{itemize}
For each $t$, the resulting geodesic chain $\gamma$ is a suitable union of arcs $\gamma_j$ contained in $R_j$ and arcs $\tilde{\gamma}_i$ connecting points from adjacent patches.  (To check integration properties, we may ignore any specific ordering, in which the arcs need to be combined). 
 
 The length of each arc $\gamma_j$ in $R_j$ is 
  \begin{equation*}
 \ell(\gamma_j)=c_{\mathrm{in}}n^{-1/d} \,  ,
  \end{equation*}
   and there are $n$ of them. The length of an  arc 
   $\tilde{\gamma}_i$ connecting adjacent patches is  bounded by 
  \begin{equation*}
\ell(\tilde{\gamma}_i)\leq 2C_{\diam} n^{-1/d} \, ,
  \end{equation*}
 and there are at most $2n C_{\mathrm{kiss}}$ of them. Hence, the length of $\gamma$ is sandwiched between 
 \begin{equation*}
 c_{\mathrm{in}}n^{1-1/d}\leq \ell(\gamma)\leq (c_{\mathrm{in}}+2 C_{\mathrm{kiss}}2C_{\diam}) n^{1-1/d}\,.
  \end{equation*}
  Since $n$ is of the order $t^d$ in \eqref{eq:nt}, the length $\ell(\gamma)$ is bounded by a constant times $t^{d-1}$ as claimed in Theorem \ref{tm:MZ}.

  \subsection{The curve $\gamma$ satisfies \mz\ inequalities}
 We fix $f\in\Pi_t$ and aim to verify the inequalities with constants that do not depend on the specific choice of $f$ and are independent of $t$. We only consider $p\in[1,\infty)$ at this point.
  
 The arcs $\gamma _j$ are contained in a convex subset of $R_j$. We use the mean value theorem and obtain parameters $\tau _j$ (that may depend on $f$) 
such that 
\begin{align*}
  \int _{\gamma _j} |f|^p &=  \int _0 ^1 |f(\gamma _j(s))|^p \|\dot{\gamma_j}(s)\|
                             \, ds \\
&= |f(\gamma _j(\tau _j))|^p \ell (\gamma _j) \\
&= |f(y_j)|^p \ell (\gamma _j) \, .
\end{align*}
Here the point $y_j= \gamma _j(\tau _j)$ is in $R_j$ by the convexity of the inner ball in $R_j$. 
We point out that $y_j$ may depend of $f$. Since $y_j\in R_j$ for $j=1, \dots ,n$, Theorem \ref{thm1} implies that the points $\{y_1,
\dots , y_n\}$  satisfy \mz\ inequalities, and therefore
\begin{equation}\label{eq:mzforpoints}
(1-\varepsilon )^p \int _{\sd } |f|^p\leq \frac{1}{n} \sum _{j=1}^n |f(y_j)|^p \leq (1+\varepsilon )^p \int _{\sd } |f|^p
\, .
\end{equation}
Next, we transfer this inequality from points to the curve.
\subsection*{Lower bound for a \mz\ inequality}
To derive the lower bound for a \mz\ inequality for the full curve
$\gamma $, we observe 
\begin{equation*}
\frac{\ell (\gamma _j)}{\ell (\gamma )} \geq
\frac{c_{\mathrm{in}}n^{-1/d}}{(c_{\mathrm{in}}+4 C_{\mathrm{kiss}} C_{\diam}) n^{1-1/d}} = \frac{A_d}{n} \, ,
\end{equation*}
where $A_d=\frac{c_{\mathrm{in}}}{(c_{\mathrm{in}}+4
  C_{\mathrm{kiss}} C_{\diam})}$ depends only on the dimension.

In combination with the lower \mz\ bound \eqref{eq:mzforpoints} we obtain
\begin{align*}
  \frac{1}{\ell (\gamma )} \int _\gamma |f|^p & \geq   \frac{1}{\ell
(\gamma )}\sum _{j=1}^n \int _{\gamma  _j}|f|^p \\
&=  \frac{1}{\ell (\gamma )}\sum _{j=1}^n \ell (\gamma _j) |f(y_j)|^p\\
&\geq A_d \frac{1}{n} \sum _{j=1}^n  |f(y_j)|^p\\
 &\geq A_d (1-\varepsilon )^p \int _{\sd } |f|^p \, .                 
\end{align*}
  
  \subsection*{Upper bound for a \mz\ inequality}
 To verify the upper \mz\
 inequality, we estimate the contribution of the arcs $\gamma _j$  similarly. The fraction of the length 
$$
\frac{\ell (\gamma _j)}{\ell (\gamma )} \leq \frac{c_{\mathrm{in}}n^{-1/d}}{ c_{\mathrm{in}}n^{1-1/d}} = \frac{1}{n}
$$
leads to the upper bound for the contribution of the arcs $\gamma_j$
\begin{align*}
  \frac{1}{\ell (\gamma )} \sum_{j=1}^n\int _{\gamma_j} |f|^p 
  &=  \frac{1}{\ell (\gamma )}\sum _{j=1}^n \ell (\gamma _j) |f(y_j)|^p\\
&\leq \frac{1}{n} \sum _{j=1}^n  |f(y_j)|^p\\
 &\leq  (1+\varepsilon )^p \int _{\sd } |f|^p \, .                 
\end{align*}

 For the contribution of   $\tilde{\gamma } _i$ to the upper
\mz\ bound, we first bound the ratios $\ell(\tilde{\gamma}_i) / \ell(\gamma)$. 
Since $\ell(\tilde{\gamma}_i)\leq 2C_{\diam} n^{-1/d}$ and
$\ell(\gamma)\geq c_{\mathrm{in}}n^{1-1/d}$, we obtain
$$
\frac{\ell(\tilde{\gamma}_i)}{\ell(\gamma)} \leq \frac{2C_{\diam} n^{-1/d}}{c_{\mathrm{in}}n^{1-1/d}}\leq \frac{\tilde{B}_d}{n},
$$
where $\tilde{B}_d=\frac{2C_{\diam}}{c_{\mathrm{in}}}$ depends only on
the dimension $d$. 

The integral along $\tilde{\gamma}_i$ is again evaluated by the mean value theorem, which yields a point $z_i=
\tilde{\gamma}_{i}(\tau '_i)$ such that
$$
\int _{\tilde{\gamma}_i} |f|^p = |f(\tilde{\gamma}_i(\tau '_i))|^p \ell
(\tilde{\gamma}_j) = |f(z_i)|^p \ell (\tilde{\gamma}_i) \, .
$$
For each $\tilde{\gamma}_i$, there is $k\in \{1,\ldots,n\}$ such that $\tilde{\gamma}_i$ is an arc starting from one of the two points in $R_k$, so that  
$z_i$ must lie in $\mathrm{conv}(U_k)$, where $U_k=\bigcup_{\overline{R}_k\cap \overline{R}_l\neq \emptyset}R_l$.  Since by Lemma~\ref{lemma:kiss me} 
$$
\#N_j = \# \{k: \mathrm{conv}(U_k) \cap \overline{R_j} \neq \emptyset \} \leq C_{\mathrm{kiss}}, 
$$
each $R_j$ contains at most $2 C_{\mathrm{kiss}}$ many of the points $z_i$. 

We may add more points, so that every $R_j$ contains $2
C_{\mathrm{kiss}}$ points. This enhanced set is a 
union of $2 C_{\mathrm{kiss}}$ \mz\ sets. Therefore we may use the upper bound of Theorem \ref{thm1} for the points $z_i$ when the bounds are multiplied by $2 C_{\mathrm{kiss}}$. 

The contribution of the $\tilde{\gamma}_i$ is now bounded by
\begin{align*}
\frac{1}{\ell(\gamma)} \sum _{i}  \int _{\tilde{\gamma}_i} |f|^p &\leq  \frac{1}{\ell(\gamma)}\sum _{i}
 |f(z_i)|^p \ell  (\tilde{\gamma}_i) \\
&\leq   \frac{\tilde{B}_d}{n}  \sum _{i}|f(z_i)|^p\\
& \leq \tilde{B}_d 2 C_{\mathrm{kiss}} (1+\varepsilon)^p \int_{\mathbb{S}^d}|f|^p\,.
\end{align*}
Altogether, we obtain the upper \mz\ bound
$$
\frac{1}{\ell(\gamma)}  \int _{\gamma} |f|^p \leq (1+\tilde{B}_d 2 C_{\mathrm{kiss}}) (1+\varepsilon)^p \int_{\mathbb{S}^d}|f|^p\,,\qquad \text{for all }f\in\Pi_t\,,
$$
so that we may choose $B_d=1+\tilde{B}_d 2 C_{\mathrm{kiss}}$.

For the case $p=\infty$, the upper  inequality is trivially satisfied by $\|f\|_{L^\infty(\gamma)}\leq \|f\|_{L^\infty(\sd)}$. To verify the lower bound, we know that the trace of the arc $\gamma_j$ is compact in $\sd$, so that there is $y_j\in R_j$ satisfying 
\begin{equation*}
\max_{s\in[0,1]}|f(\gamma_j(s))| = |f(y_j)|\,,\quad j=1,\ldots,n\,.
\end{equation*}
The lower \mz\ inequality for the points $\{y_j\}_{j=1}^n$ with $p=\infty$ yields
\begin{align*}
\|f\|_{L^\infty(\gamma)} &\geq \max_{j=1,\ldots,n}|f(y_j)| \geq (1-\varepsilon)\|f\|_{L^\infty(\sd)}\,,
\end{align*}
which concludes the proof of Theorem \ref{tm:MZ}.

It remains to verify the simplified version of Theorem \ref{tm:MZ} in the introduction.
\begin{proof}[Proof of Theorem \ref{thm:mz intro}]
We may choose $\varepsilon=\frac{1}{2}$ and $A_d\leq 1\leq B_d$ in Theorem \ref{tm:MZ}, so that the inequalities 
\begin{equation*}
A_d\leq A_d^{1/p}\leq B^{1/p}_d\leq B_d
\end{equation*}
imply Theorem \ref{thm:mz intro} in the introduction with the lower and upper constants $A_d/2$ and $B_d/2$.
\end{proof}

\end{document}